\documentclass[12pt]{amsart}

\usepackage{amssymb}
\usepackage{scalefnt}
\usepackage{setspace}
\usepackage{graphicx}
\usepackage{color}
\usepackage{transparent}
\usepackage{float}
\usepackage{blkarray}
\usepackage{url}


\newtheorem{thm}{Theorem}
\newtheorem*{theorem}{Theorem}
\newtheorem{prop}[thm]{Proposition}

\newtheorem{lem}[thm]{Lemma}
\newtheorem{cor}[thm]{Corollary}

\theoremstyle{definition}

\newtheorem{example}[thm]{\sc Example}
\newtheorem{question}[thm]{\sc Question}

\theoremstyle{remark}
\newtheorem{remark}[thm]{\sc Remark}
\theoremstyle{conjecture}
\newtheorem{conj}[thm]{Conjecture}

\newtheorem*{conjecture}{Conjecture}
\newtheorem*{thmA}{Theorem A}
\newtheorem*{propA}{Proposition D}
\newtheorem*{lemA}{Lemma B}
\newtheorem*{remarkA}{\sc Remark C}


\newcommand{\R}{\mathbf{R}} 
\newcommand{\C}{\mathbf{C}} 
\newcommand{\Z}{\mathbf{Z}} 


\makeatletter
\let\@wraptoccontribs\wraptoccontribs
\makeatother


\begin{document}
\title{Signature, positive Hopf plumbing and the~Coxeter transformation}
\author{Livio Liechti}
\address{Livio Liechti, Mathematisches Institut, Universit\"at Bern, Sidlerstrasse 5, CH-3012 Bern, Switzerland}
\email{livio.liechti@math.unibe.ch}
\contrib[With appendix by]{Peter Feller}
\contrib[]{Livio Liechti}
\address{Peter Feller, Boston College, Department of Mathematics, Maloney Hall, Chestnut Hill, MA 02467, United States}
\email{peter.feller.2@bc.edu}
\subjclass[2010]{Primary: 57M27; Secondary: 20F55}
\thanks{The author is supported by the Swiss National Science Foundation (project no.\ 137548).}
\urladdr{}
\pagestyle{plain} 


\begin{abstract} {By a theorem of A'Campo, the eigenvalues of certain Coxeter transformations are positive real or lie on the unit circle.
By optimally bounding the signature of tree-like positive Hopf plumbings from below by the genus, we prove that at least two thirds of them lie on the unit circle.
In contrast, we show that for divide links, the signature cannot be linearly bounded from below by the genus.}
\end{abstract}

\maketitle


\section{Introduction}
\subsection{Tree-like positive Hopf plumbings and Coxeter systems}
Let $\Gamma$ be a finite tree embedded in the plane. The \textit{tree-like positive Hopf plumbing corresponding to $\Gamma$} is obtained by taking positive Hopf bands $H_i$ with core curves $\alpha_i$ that 
are in one-to-one correspondence with the vertices $v_i$ of $\Gamma$ and, starting from the root of $\Gamma$, plumbing them together such that $\alpha_i$ and $\alpha_j$ intersect each other exactly once if the vertices $v_i$ and $v_j$ are connected by an edge of $\Gamma$. Otherwise, the $\alpha_i$ do not intersect. 
The planar graph structure of $\Gamma$ provides a cyclic order on edges adjacent to a given vertex, which has to be preserved by the intersection points of the $\alpha_i$.
Here, plumbing denotes the operation of glueing two surfaces separated by a sphere together along some square on the sphere, as defined by Stallings~\cite{Sta}.
Again by Stallings, this procedure yields a fiber surface whose monodromy is conjugate to the product $T_{\alpha_1} \cdots T_{\alpha_n}$ of right Dehn twists along the $\alpha_i$.
Starting with the one edge graph with two vertices, this procedure yields the \textit{positive trefoil fiber}, the fiber surface of the left-handed trefoil knot, see Fig.~\ref{trefoilfiber}.

\begin{figure}
\begin{center}
\includegraphics [scale = 0.35] {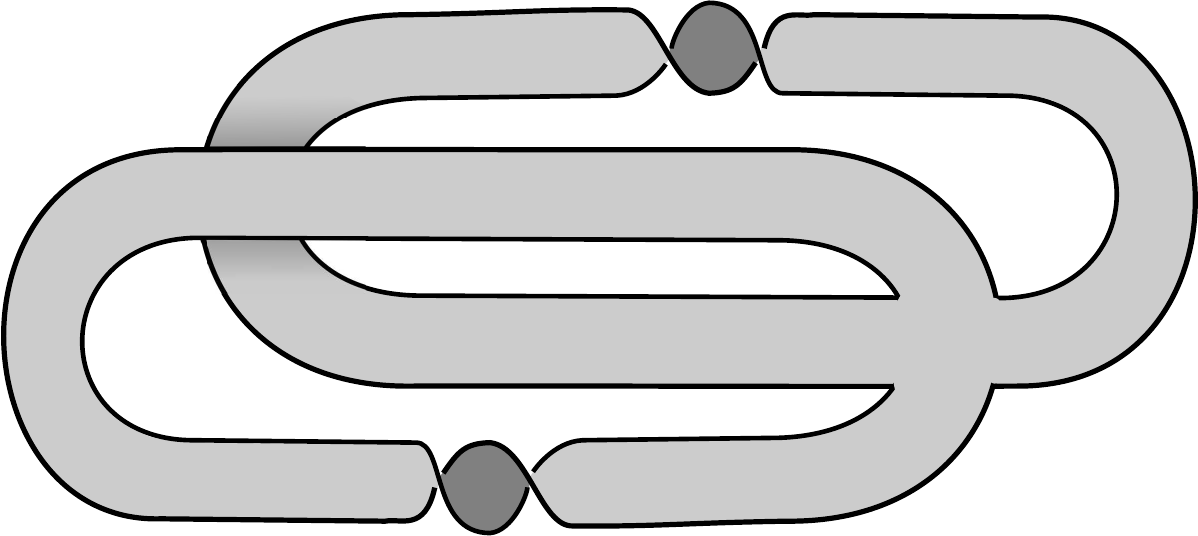}
\end{center}
\caption{}
\label{trefoilfiber}
\end{figure}

Let $\Gamma$ be a finite forest. The \textit{Coxeter system $(W,S)$ corresponding to $\Gamma$} is the group $W$ with generating set $S=\{s_1,...,s_n\}$, 
where the $s_i$ are in one-to-one correspondence with the vertices $v_i$ of $\Gamma$, relations $s_i^2=1$ for all $i$, the relation $(s_is_j)^3=1$ for all the $v_i$ and $v_j$ that are connected by an edge of $\Gamma$ and the relation $(s_is_j)^2=1$ for all the $v_i$ and $v_j$ that are not connected by an edge of $\Gamma$~\cite{Co1}. Note that except for $s_i^2=1$, these are the relations in the mapping class group of two positive Dehn twists along curves that intersect exactly once or that do not intersect, respectively. 
Let $V_{\Gamma}$ be the real vectorspace generated by the generators $s_i$ of $W$, equipped with the symmetric bilinear form $q_{\Gamma}$ given by $q_{\Gamma}(s_i,s_i) = -2$ and $q_{\Gamma}(s_i,s_j) = 1$ if and only if $v_i$ and $v_j$ are connected by an edge of $\Gamma$.
To every generator $s_i$ we associate the reflexion $R_i$ on the hyperplane orthogonal to $s_i$, given by $R_i(s_j) = s_j + q_{\Gamma}(s_i,s_j)s_i$. 
The \textit{Coxeter transformation} corresponding to $\Gamma$ is the product $R_1 \cdots R_n$ of all these reflections~\cite{Co2} and does, up to conjugation, not depend on the order of multiplication~\cite{Ste}. 
\begin{theorem}[{\cite{AC2}}] 
All eigenvalues of the Coxeter transformation corresponding to a finite forest are either positive real or lie on the unit circle.
\end{theorem}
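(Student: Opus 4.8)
The plan is to use the fact that a forest has no odd cycles — hence is bipartite — to write the Coxeter transformation as a product of two linear involutions in block form, after which the problem collapses to the spectral theory of a single positive semidefinite matrix.

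Concretely, I would partition the vertex set as $A\sqcup B$ with every edge of $\Gamma$ running between $A$ and $B$, and order the generators so that the indices in $A$ come first. By [Ste] the Coxeter transformation is conjugate to $c=c_Ac_B$ with $c_A=\prod_{i\in A}R_i$ and $c_B=\prod_{j\in B}R_j$, and in particular has the same eigenvalues. Since $\Gamma$ has no edge inside a colour class, the $R_i$ with $i\in A$ commute and are involutions, and likewise for $B$; using $R_i(s_j)=s_j+q_\Gamma(s_i,s_j)s_i$ one checks that, in the basis given by the $s_i$, $i\in A$, followed by the $s_j$, $j\in B$,
\[ c_A=\begin{pmatrix} -I & M \\ 0 & I \end{pmatrix},\qquad c_B=\begin{pmatrix} I & 0 \\ M^{T} & -I \end{pmatrix},\qquad c=c_Ac_B=\begin{pmatrix} MM^{T}-I & -M \\ M^{T} & -I \end{pmatrix}, \]
where $M$ is the bipartite adjacency matrix, $M_{ij}=q_\Gamma(s_i,s_j)\in\{0,1\}$ for $i\in A$, $j\in B$.

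Next I would read off the spectrum of $c$. As a product of reflections $c$ is invertible, so $0$ is not an eigenvalue; suppose $c\binom{a}{b}=\lambda\binom{a}{b}$ over $\C$ with $\lambda\neq 0$. The lower block gives $M^{T}a=(\lambda+1)b$, and substituting this into the upper block and simplifying gives $\lambda Mb=(\lambda+1)a$, hence $MM^{T}a=(\lambda+1)Mb=\tfrac{(\lambda+1)^2}{\lambda}a$. If $a=0$ then $b\neq 0$ and $\lambda=-1$, which lies on the unit circle. If $a\neq 0$ then $a$ is an eigenvector of the real symmetric positive semidefinite matrix $MM^{T}$, so $\mu:=\tfrac{(\lambda+1)^2}{\lambda}$ is an eigenvalue of $MM^{T}$ and in particular $\mu\geq 0$.

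It then remains to analyse, for $\mu\geq 0$, the quadratic $\lambda^2+(2-\mu)\lambda+1=0$: its two roots have product $1$ and sum $\mu-2$, so for $0\leq\mu\leq 4$ they are complex conjugates of modulus $1$ (equal to $-1$ at $\mu=0$ and to $1$ at $\mu=4$), while for $\mu>4$ they are real with product $1$ and positive sum, hence both positive. Thus every eigenvalue of $c$ is positive real or lies on the unit circle. The only genuine input is the bipartite reduction: it is precisely what puts $c$ into the form above with the positive semidefinite block $MM^{T}$ (equivalently, it is what makes $q_\Gamma$ restricted to each colour class equal to $-2I$), and it is the sole place where the hypothesis that $\Gamma$ is a forest enters. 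I would therefore regard setting up this block structure — and making sure the eigenvalue bookkeeping, including the degenerate case $a=0$, is exhaustive — as the main point, the remainder being the elementary discussion of a quadratic in one nonnegative parameter.
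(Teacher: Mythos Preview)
The paper does not give its own proof of this statement: it is quoted as A'Campo's theorem with citation [AC2], and is used as background rather than established in the text. So there is no in-paper argument to compare your proposal against.

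That said, your argument is correct and is essentially the classical one. The key move --- using that a forest is bipartite to write the Coxeter element (up to conjugacy, via [Ste]) as a product $c_Ac_B$ of two commuting blocks of reflections, obtaining the block form with the positive semidefinite matrix $MM^T$, and then solving $\lambda^2+(2-\mu)\lambda+1=0$ for $\mu\ge 0$ --- is exactly the ``bicolored'' approach the paper itself alludes to later (Section~5, with reference to [Mc]) in the more general setting of bipartite Coxeter graphs. Your eigenvalue bookkeeping is complete: invertibility of $c$ rules out $\lambda=0$, the case $a=0$ forces $\lambda=-1$, and for $a\ne 0$ the reduction to an eigenvalue $\mu\ge 0$ of $MM^T$ goes through since a real symmetric positive semidefinite matrix has only nonnegative real eigenvalues even when diagonalised over~$\C$. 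The quadratic analysis for $0\le\mu\le 4$ versus $\mu>4$ is straightforward and correct. One cosmetic remark: the statement is for forests, not just trees, so strictly speaking [Ste] applies tree by tree; but since a forest is still bipartite and the block computation is insensitive to connectedness, your argument covers this without modification.
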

The constructions of the monodromy of the tree-like positive Hopf plumbing and the Coxeter transformation corresponding to $\Gamma$ seem very similar. Indeed, A'Campo showed that for finite trees $\Gamma$, if one identifies the first homology of the positive Hopf plumbing corresponding to $\Gamma$ with the vector space $V_{\Gamma}$, the homological action of the monodromy $T_{\alpha_1} \cdots T_{\alpha_n}$ becomes conjugate to $- R_1 \cdots R_n$~\cite{AC5}.

\subsection{Signature}
For $p$ and $q$ coprime, one can show that the torus knot $T(p,q)$ has signature 
at least half the first Betti number of its fiber surface with the help of the recursive formulas proven by Gordon, Litherland and Murasugi~\cite{G-L-M}.
Furthermore, by Shinohara's cabling relation, this can be extended to all algebraic knots~\cite{Sh}. 
More recently, a linear lower bound that holds for all positive braids was given by Feller~\cite{Fe}.
We provide such a linear lower bound for the signature of tree-like positive Hopf plumbings. \newpage

\begin{thm}
\label{treesignature}
The signature of any tree-like positive Hopf plumbing is at least two thirds of the first Betti number.
\end{thm}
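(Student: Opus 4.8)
The plan is to reduce the statement to a purely linear-algebraic fact about the symmetrized Seifert form of the plumbing, and then prove that fact by an induction on the tree.

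First I would make the signature explicit. Let $\Gamma$ be a tree on $n$ vertices. The plumbed fiber surface has first Betti number $n$, and in the basis of core curves $\alpha_i$ the symmetrized Seifert form is $\pm(2I-A)$, where $A=A(\Gamma)$ is the adjacency matrix; this is essentially the A'Campo identification recalled above, since $q_\Gamma$ has diagonal entries $-2$ and entry $1$ on each edge. Checking the one-edge case --- the trefoil fiber, where $2I-A=\left(\begin{smallmatrix}2&-1\\-1&2\end{smallmatrix}\right)$ is positive definite of signature $2$ --- fixes the overall sign, so the signature of the plumbing equals $\operatorname{sign}(2I-A)$ while the first Betti number equals $n$. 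Writing $P$, $Z$, $N$ for the numbers of eigenvalues of $A$ that are $>2$, $=2$, $<2$ respectively, we have $\operatorname{sign}(2I-A)=N-P=n-Z-2P$, so the theorem becomes the combinatorial inequality $Z+2P\le\tfrac13 n$ for every finite tree.

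To prove $Z+2P\le\tfrac13 n$ I would induct on $n$. If the spectral radius $\rho(\Gamma)<2$ then $P=Z=0$ and there is nothing to show, so assume $\rho(\Gamma)\ge2$. By the classification of connected graphs of spectral radius at most $2$, a tree with $\rho\ge2$ contains, as a (necessarily induced) subtree, one of the affine Dynkin trees $\widetilde D_k$ ($k\ge4$, so $k+1\ge5$ vertices), $\widetilde E_6$, $\widetilde E_7$, $\widetilde E_8$, each of spectral radius exactly $2$ with simple Perron eigenvalue. I would pick such a subtree $\Gamma_0$ attached to the rest of $\Gamma$ along a single edge, with $|\Gamma_0|$ as small as possible (either a minimal one, or a minimal one together with one adjacent vertex), set $\Gamma'=\Gamma\setminus V(\Gamma_0)$, and compare the inertias of $2I-A(\Gamma)$ and $2I-A(\Gamma')$ via Cauchy interlacing and the block decomposition $A(\Gamma)-2I=\left(\begin{smallmatrix}A(\Gamma_0)-2I & B\\ B^{T} & A(\Gamma')-2I\end{smallmatrix}\right)$, in which the top-left block is negative semidefinite with one-dimensional kernel and $B$ has rank one. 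The aim is $Z(\Gamma)+2P(\Gamma)\le Z(\Gamma')+2P(\Gamma')+\tfrac13|\Gamma_0|$, after which the inductive hypothesis applied componentwise to $\Gamma'$ closes the argument.

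The main obstacle is precisely this last inequality: extracting the sharp per-vertex rate $\tfrac13$. Crude interlacing only says that deleting the $\ge5$ vertices of $\Gamma_0$ changes $P$ and $Z$ each by $O(1)$, which is not good enough when $\Gamma_0$ is the five-vertex star $\widetilde D_4=K_{1,4}$; one must use the precise shape of $\Gamma_0$ --- negative semidefinite with one-dimensional kernel, attached along one edge --- and enlarge $\Gamma_0$ by a vertex when that vertex turns the kernel into a genuinely positive direction, noting that the smallest "overcritical" trees $K_{1,5}$ and the $(2,1,1,1)$-spider both have six vertices and defect $2P+Z=2$, which is exactly what forces the constant $\tfrac13$. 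I expect the cleanest organization is a leaf-to-root diagonalization of $2I-A$ (in the spirit of the Jacobs--Trevisan algorithm): process $\Gamma$ from the leaves, record a pivot at each vertex, observe that a nonnegative pivot can only appear at a vertex whose hanging subtree already contains one of the affine Dynkin trees (hence has at least $5$, respectively $6$, vertices), and charge pairwise disjoint blocks of $3$, respectively $6$, vertices to the zero, respectively positive, pivots. Finally, optimality of $\tfrac23$ is witnessed by caterpillar-like trees obtained from copies of $K_{1,5}$ joined by single edges between leaves: as $n\to\infty$ one has $P\sim n/6$ and $Z=0$, so $\operatorname{sign}/b_1\to\tfrac23$.
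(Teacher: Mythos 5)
Your reduction is correct: the paper's symmetrized Seifert form is $2I+A(\Gamma)$, which for a tree (bipartite, hence spectrum of $A$ symmetric about $0$) is congruent to $2I-A(\Gamma)$, so the theorem is indeed equivalent to the spectral inequality $Z+2P\le \tfrac13 n$, where $P$ and $Z$ count adjacency eigenvalues $>2$ and $=2$. You have also identified the right structural input (the ADE/affine Dynkin classification of trees of spectral radius $<2$, resp.\ $=2$, which is exactly what the paper's positive-definiteness checks amount to), and your intended decomposition --- peel off a minimal affine Dynkin subtree near the leaves, enlarged by one vertex when it becomes overcritical --- is essentially the subtree $\Gamma_0$ that the paper's Lemma~\ref{zerlegungslemma} produces.

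However, the proof is not complete, and the gap is precisely where you say it is. The entire content of the theorem is the sharp charging lemma: that each unit of signature defect can be charged to a block of vertices of the right size ($6$ per negative pivot, $3$ per zero pivot) \emph{and that these blocks can be chosen pairwise disjoint}. You assert this as an expectation ("I expect the cleanest organization is\ldots") rather than proving it. Concretely: (i) crude Cauchy interlacing after deleting $V(\Gamma_0)$ does not give $Z(\Gamma)+2P(\Gamma)\le Z(\Gamma')+2P(\Gamma')+\tfrac13|\Gamma_0|$, as you acknowledge; (ii) in the leaf-to-root diagonalization, the appearance of a zero pivot forces an edge deletion and pivot reassignment (as in Jacobs--Trevisan), which changes the subsequent recursion and must be shown not to create further nonpositive pivots that would have to be charged to overlapping vertex sets (trees can carry eigenvalue $2$ with multiplicity $>1$, and eigenvalues $>2$ of high multiplicity, so this is not automatic); (iii) the claim that a negative pivot always "owns" six fresh vertices needs an argument in configurations where several branch vertices of degree $\ge3$ are close together --- this is exactly the situation the paper resolves by its seven-case analysis on the outermost vertex of degree at least three, including the explicit base changes in Cases 3 and 5 showing $\sigma(\Gamma)=\sigma(\Gamma-\Gamma_0)+4$. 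Until the disjoint-charging lemma is actually proved (by that case analysis or an equivalent one), what you have is a correct reformulation and a plan, not a proof.
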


This result is optimal. Indeed, we construct tree-like positive Hopf plumbings of arbitrarily high genus but signature equal to exactly two thirds of the first Betti number.

It is well-known that for fiberd links, the Alexander polynomial equals the characteristic polynomial of the homological action of the monodromy. Thus, for a finite tree $\Gamma$, the Coxeter transformation corresponding to $\Gamma$ has an eigenvalue $\lambda$ if and only if $-\lambda$ is a zero of the Alexander polynomial of the tree-like positive Hopf plumbing corresponding to $\Gamma$. 
Furthermore, the absolute value of the signature of a link is a lower bound on the number of zeroes of the Alexander polynomial that lie on the unit circle. 
If the Alexander polynomial has only simple zeroes on the unit circle, this follows from a result of Stoimenow~\cite{Sto1}. 
In the Appendix, we give a general algebraic proof of this fact. Thus, we obtain the following corollary of Theorem~\ref{treesignature}, which applies exactly to the setting of A'Campo~\cite{AC2}.

\begin{cor}
\label{circulareigenvalues}
At least two thirds of the eigenvalues of the Coxeter transformation corresponding to a finite forest lie on the unit circle.
\end{cor}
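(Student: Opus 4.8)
The plan is to deduce Corollary~\ref{circulareigenvalues} from Theorem~\ref{treesignature} by passing through the dictionary between the Coxeter transformation and the monodromy of the tree-like positive Hopf plumbing. First I would reduce to the case of a finite tree $\Gamma$: a finite forest is a disjoint union of trees, the Coxeter transformation of the forest is the direct sum of the Coxeter transformations of its tree components, and the set of eigenvalues (with multiplicity) is the union of the eigenvalue sets of the components; hence it suffices to prove the statement componentwise. So fix a finite tree $\Gamma$ on $n$ vertices.

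Next I would invoke A'Campo's identification [AC5] recalled in the introduction: under the identification of $H_1$ of the tree-like positive Hopf plumbing $F_\Gamma$ corresponding to $\Gamma$ with $V_\Gamma$, the homological monodromy $(T_{\alpha_1}\cdots T_{\alpha_n})_*$ is conjugate to $-R_1\cdots R_n$, where $R_1\cdots R_n$ is the Coxeter transformation. The first Betti number of $F_\Gamma$ equals $n=\dim V_\Gamma$, which is the number of eigenvalues of the Coxeter transformation counted with multiplicity. Since $F_\Gamma$ is a fiber surface, its Alexander polynomial $\Delta_\Gamma(t)$ equals (up to units) the characteristic polynomial of the homological monodromy, which is the characteristic polynomial of $-R_1\cdots R_n$; consequently $\lambda$ is an eigenvalue of the Coxeter transformation if and only if $-\lambda$ is a root of $\Delta_\Gamma$, and $\lambda$ lies on the unit circle if and only if $-\lambda$ does. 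Thus the number of eigenvalues of the Coxeter transformation lying on the unit circle equals the number of roots of $\Delta_\Gamma$ lying on the unit circle.

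I would then apply the signature bound. By the fact proved in the Appendix (extending Stoimenow [Sto1] to the case of non-simple roots), the absolute value of the signature $|\sigma(F_\Gamma)|$ is a lower bound for the number of roots of $\Delta_\Gamma$ on the unit circle, counted with multiplicity. Combining this with Theorem~\ref{treesignature}, which gives $|\sigma(F_\Gamma)|=\sigma(F_\Gamma)\geq \tfrac{2}{3}b_1(F_\Gamma)=\tfrac{2}{3}n$, we conclude that at least $\tfrac{2}{3}n$ of the roots of $\Delta_\Gamma$ — hence at least $\tfrac{2}{3}n$ of the eigenvalues of the Coxeter transformation corresponding to $\Gamma$ — lie on the unit circle. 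Summing over the tree components of a general finite forest finishes the argument.

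I do not expect any serious obstacle here, since all the hard work is packaged into Theorem~\ref{treesignature}, A'Campo's homological identification, and the Appendix lemma. The only points requiring a modicum of care are bookkeeping of multiplicities (making sure the "two thirds" statement refers to eigenvalues counted with multiplicity, consistent with $b_1$ and with the degree of the Alexander polynomial) and the harmless sign twist $\lambda\mapsto-\lambda$ relating Coxeter eigenvalues to Alexander roots, which does not affect membership in the unit circle. If one instead wanted a statement about distinct eigenvalues, a short remark would be needed, but for the stated corollary the with-multiplicity count is exactly what Theorem~\ref{treesignature} and the Appendix deliver.
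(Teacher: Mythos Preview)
Your proposal is correct and follows exactly the route the paper indicates in the paragraph preceding the corollary: reduce to trees, use A'Campo's identification of the monodromy with (minus) the Coxeter transformation, equate eigenvalues with Alexander roots up to the sign twist $\lambda\mapsto-\lambda$, then combine Theorem~\ref{treesignature} with the Appendix bound $|\sigma|\leq\#\{\text{roots on }S^1\}$. Your explicit handling of the forest-to-tree reduction and of multiplicities is a sensible addition but changes nothing in substance.
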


Since the signature is a lower bound for the topological four-ball first Betti number by a result of Kauffman and Taylor~\cite{K-T}, we obtain yet another result as a corollary of Theorem~\ref{treesignature}.

\begin{cor}
\label{four-ball}
The topological four-ball first Betti number of any tree-like positive Hopf plumbing is at least two thirds of the ordinary first Betti number. 
\end{cor}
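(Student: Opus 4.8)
The plan is to read this off directly from Theorem~\ref{treesignature} together with the Kauffman--Taylor inequality [K-T], which bounds the topological four-ball genus $g_4^{\mathrm{top}}(L)$ of a link $L$ below by $\frac{1}{2}|\sigma(L)|$; the only thing that needs to be added is the passage from the first Betti number of the fiber surface to its genus.

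First I would note that a tree-like positive Hopf plumbing is, by construction, a connected fiber surface $F$ of some genus $g$ with $\mu\geq 1$ boundary components, so that $b_1(F)=2g+\mu-1\geq 2g$; moreover, a fiber surface is genus-minimizing, so $g$ is also the genus (Seifert genus) of the underlying fibered link and the statement is unambiguous. Plugging $b_1(F)\geq 2g$ into Theorem~\ref{treesignature} gives
\[
\sigma\;\geq\;\frac{2}{3}\,b_1(F)\;\geq\;\frac{4}{3}\,g .
\]
It then suffices to apply [K-T]: since the signature of a positive Hopf plumbing is nonnegative, $g_4^{\mathrm{top}}\geq\frac{1}{2}|\sigma|=\frac{1}{2}\sigma\geq\frac{1}{2}\cdot\frac{4}{3}g=\frac{2}{3}g$, as claimed.

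I do not expect a genuine obstacle here: the corollary is a two-line concatenation of Theorem~\ref{treesignature} and [K-T] once one records the elementary inequality $b_1(F)\geq 2g$. The only points deserving a word of care are bookkeeping ones --- fixing the sign convention so that positive Hopf plumbings have $\sigma\geq 0$, so that $\frac{1}{2}|\sigma|=\frac{1}{2}\sigma$, and recalling that a fiber surface realizes the genus of its link, so that ``genus'' in the statement refers unambiguously to $g=g(F)$.
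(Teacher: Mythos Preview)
Your argument is correct and is exactly the route the paper takes: apply the Kauffman--Taylor bound $g_4^{\mathrm{top}}\geq\tfrac{1}{2}|\sigma|$ to Theorem~\ref{treesignature}, with the elementary observation $b_1(F)\geq 2g$ bridging the first Betti number to the genus. The paper leaves this last step implicit, but your spelling it out (and your remarks on sign and on the fiber surface realising the genus) only make the deduction cleaner.
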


\subsection{Divides} A \textit{divide} is a finite collection $P$ of some generically immersed intervals or circles in the closed unit disc $D$. There is a canonical way of lifting these 
intervals and circles to a link $L(P) \subset S^3$, where $S^3$ lies inside the tangent bundle $TD$, which is identified with $D\times\R^2$. 
Divides and their associated links were introduced as a generalisation of algebraic links by A'Campo~\cite{AC3}. 
Furthermore, divide links of connected divides were shown to be fiberd by A'Campo~\cite{AC4} and, more precisely, to be plumbings of positive Hopf bands by Ishikawa~\cite{Is}. 
While the signature of any nontrivial divide knot is also strictly positive, we construct divide knots of arbitrarily high genus but signature equal to two. 
This is in strong contrast with the above examples where the signature is known to be linearly bounded from below by the genus.

\begin{figure}[h]
\begin{center}
\includegraphics [scale = 0.75] {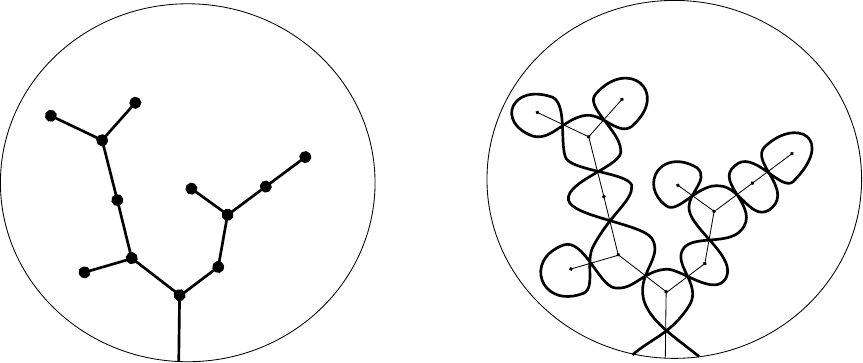}
\end{center}
\caption{}
\label{slalomdefinition}
\end{figure}
In \cite{AC5}, A'Campo introduced the \textit{slalom knots}, a certain class of knots which are both divides and tree-like positive Hopf plumbings. As divides, they are obtained in the following way: Take a rooted tree $\Gamma$ inside the unit disc~$D$ with the root on the boundary $\partial D$. Now, immerse an interval by the kind of slalom motion around the vertices of $\Gamma$ depicted in Fig.~\ref{slalomdefinition}. Equivalently, take $\Gamma$, insert a new vertex for every edge, then remove the root and its adjacent edge and do the tree-like positive Hopf plumbing that corresponds to this new planar graph. 
Note that different planar embeddings of the underlying abstract graph of $\Gamma$ yield different slalom knots which are related by mutation~\cite{Ge}. 
For this class of knots, a stronger version of Theorem~\ref{treesignature} holds and thus also a stronger version of the Corollaries~\ref{circulareigenvalues} and~\ref{four-ball}.

\begin{thm}
\label{slalomsignature}
 The signature of any slalom knot is at least three quarters of the first Betti number.
\end{thm}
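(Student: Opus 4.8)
The plan is to reduce Theorem~\ref{slalomsignature} to the signature of an explicit symmetric matrix attached to the slalom tree, to compute that signature by symmetric Gaussian elimination carried out from the leaves (which for a tree amounts to a continued-fraction recursion on the vertices), and then to control the number of non-positive pivots by a charging argument. The improvement over Theorem~\ref{treesignature} will come from the defining feature of a slalom tree: consecutive branch points are separated by a degree-two vertex, and such a vertex always contributes a harmless positive pivot.

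First I would set up the linear algebra. With $n := |V(\Gamma)|$ for the rooted tree $\Gamma$, the slalom knot $K$ is the tree-like positive Hopf plumbing of the tree $\Gamma'$ obtained from $\Gamma$ by subdividing every edge and then deleting the root together with its incident edge, so that the fibre surface of $K$ has first Betti number $b_1 = |V(\Gamma')| = 2(n-1)$. Moreover $\Gamma'$ is bipartite with parts $O$ and $N$, where $O$ consists of the $n-1$ surviving vertices of $\Gamma$ (the \emph{old} vertices) and $N$ of the $n-1$ subdivision vertices (the \emph{new} vertices), every new vertex having degree $2$ in $\Gamma'$ except one, which is a leaf. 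As in the computation underlying Theorem~\ref{treesignature}, the symmetrised Seifert form of a tree-like positive Hopf plumbing is, in the basis of Hopf-band cores, $\pm(2\,\mathrm{Id}-A(\Gamma'))$, where $A(\Gamma')$ is the adjacency matrix; with the sign convention under which positive links have positive signature this gives $\mathrm{sign}(K) = \mathrm{sign}(2\,\mathrm{Id}-A(\Gamma'))$, so it suffices to prove $\mathrm{sign}(2\,\mathrm{Id}-A(\Gamma')) \ge \tfrac34 b_1$.

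Next I would evaluate this signature by Sylvester's law of inertia. Rooting $\Gamma'$ at the old vertex adjacent to the degree-one new vertex and eliminating each vertex only after all of its children, symmetric Gaussian elimination of $2\,\mathrm{Id}-A(\Gamma')$ produces one pivot $\delta_u$ per vertex, with $\delta_u = 2$ for a leaf and $\delta_u = 2 - \sum_w 1/\delta_w$ in general, the sum running over the children $w$ of $u$; then $\mathrm{sign}(2\,\mathrm{Id}-A(\Gamma')) = \#\{u:\delta_u>0\} - \#\{u:\delta_u<0\}$, a vanishing pivot at a vertex $v$ being dealt with by taking $v$ together with its parent as a $2\times 2$ pivot block, which has signature $0$ and leaves the remaining pivots unchanged. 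Eliminating the new vertices first, each new vertex $m$ has a single child, an old vertex $c$, so $\delta_m = 2 - 1/\delta_c$, which is positive whenever $\delta_c > \tfrac12$; and for an old vertex $v$ with old children $c_1,\dots,c_k$ one computes $\delta_v = 2 - \sum_{i=1}^k \tfrac{\delta_{c_i}}{2\delta_{c_i}-1}$. Consequently a non-positive or vanishing pivot can occur only at an old vertex $v$ with $\delta_v \le \tfrac12$ — at $v$ itself when $\delta_v \le 0$, or at the new vertex directly above $v$ when $0 < \delta_v \le \tfrac12$, whereas a negative old pivot forces the new pivot above it to be positive. Calling such an old vertex \emph{bad} and writing $B$ for their number, a short bookkeeping argument then gives $\mathrm{sign}(2\,\mathrm{Id}-A(\Gamma')) = b_1 - 2B$. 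Since $|O| = n-1 = \tfrac12 b_1$, Theorem~\ref{slalomsignature} becomes equivalent to the assertion $B \le \tfrac14 |O|$, i.e.\ that at most a quarter of the old vertices are bad.

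This combinatorial assertion is the heart of the proof, and I would prove it by a charging argument: to each bad old vertex one assigns, injectively, three old vertices with $\delta > \tfrac12$ lying in its subtree in $\Gamma'$, which yields $3B \le |O|$ and hence $\mathrm{sign}(K) = b_1 - 2B \ge \tfrac34 b_1$. The assignment would be built by induction on rooted subtrees, with a strengthened hypothesis tracking how much surplus good structure a subtree carries as a function of the value $\delta_v$ at its root; the input is the elementary behaviour of $x \mapsto \tfrac{x}{2x-1}$, which equals $\tfrac23$ at $x=2$, exceeds $\tfrac23$ on $(\tfrac12,2)$, and takes values in $(0,\tfrac12)$ for $x<0$, so that the cheapest way to drag $\delta_v$ down to $\tfrac12$ is to give $v$ three leaf-children — realised when $v$ carries three legs of length two in $\Gamma'$, which attains equality $\mathrm{sign}(K) = \tfrac34 b_1$ and shows the bound to be optimal. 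I expect the main obstacle to be precisely this extremal analysis: one must show \emph{uniformly} that no deep subtree can make an old vertex bad using fewer than three private good descendants — in particular in the regime where a child-pivot is only slightly larger than $\tfrac12$, which is very costly for $v$ but forces that child's own subtree to carry a large surplus of good vertices — and one must organise the induction so that the private descendants assigned to nested bad vertices never overlap; carrying the vanishing-pivot configurations (which include the extremal examples) through this induction is the remaining technical point.
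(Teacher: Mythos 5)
Your reduction is sound and your route is genuinely different from the paper's: the paper omits the proof but indicates it runs parallel to Theorem~\ref{treesignature}, i.e.\ via a decomposition lemma in the style of Lemma~\ref{zerlegungslemma} that repeatedly peels off a bounded-size subtree on which the form is (nearly) positive definite, so that each peeled piece contributes at least three quarters of its Betti number. You instead compute the signature globally by leaf-to-root elimination of $2\,\mathrm{Id}-A(\Gamma')$ and reformulate the theorem as ``at most a quarter of the old vertices are bad.'' That setup is correct: the pivot recursion, the pairing of each old vertex with the new vertex above it giving $\mathrm{sign}=b_1-2B$ (modulo the unpaired root/leaf pair, which you should treat explicitly), and the extremal configuration ($v$ with three legs of length two, giving $\delta_v<0$ and equality $\mathrm{sign}=\tfrac34 b_1$) all check out.

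The problem is that the heart of the argument is missing. The charging lemma --- every bad old vertex admits three \emph{pairwise disjoint} good old descendants --- is exactly the content of the theorem, and you only describe the shape of an induction that would prove it, explicitly flagging the hard regimes (a child pivot barely above $\tfrac12$, vanishing pivots, nested bad vertices) as ``the remaining technical point.'' Until that induction is carried out with a quantitative hypothesis (e.g.\ a lower bound on the number of unassigned good vertices in a subtree as a function of $\delta$ at its root), there is no proof. There is also an arithmetic slip at the punchline: $3B\le|O|$ alone gives only $\mathrm{sign}\ge\tfrac23 b_1$; what you need, and what does follow once the three assigned vertices are good and hence distinct from the bad ones, is $4B\le|O|$, i.e.\ $B\le\tfrac14|O|=\tfrac18 b_1$. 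State that disjointness explicitly when you close the argument.
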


This lower bound is optimal in the same sense as Theorem~\ref{treesignature}. Since the proofs of Theorem~\ref{slalomsignature} and its optimality use exactly the same ideas as the proofs for the corresponding statements for general tree-like positive Hopf plumbings, we omit them.

The last two sections are more open in nature. We ask whether divide knots are plumbings of positive trefoil fibers and what can be said about homological monodromies that are, up to a sign, conjugate to some Coxeter transformation. 
We furthermore conjecture that any zero of the Alexander polynomial of a positive braid link has real part smaller or equal to 1.
\newline

{\sc Acknowledgements.} I warmly thank Sebastian Baader for introducing me to the concepts and questions involved in this paper. I also thank Pierre Dehornoy and Peter Feller for valuable ideas and explanations, Luca Studer for his contribution to the calculations leading to Proposition~\ref{schnecken} and Filip Misev for pointing out the example given in Remark~C of the appendix. 
Finally, I thank the referee for helpful suggestions and corrections.


\section{Signature of tree-like Hopf plumbings}
Let $\Gamma$ be a finite tree embedded in the plane. A matrix $S$ of a Seifert form of the corresponding positive Hopf plumbing with core curves $\alpha_i$ can easily be calculated. One obtains $S_{ii} =2$ for all $i$ and $S_{ij} = 1$ if and only if $\alpha_i$ and $\alpha_j$ intersect, otherwise $S_{ij} = 0$.
In order to show that for any $\Gamma$, the signature of this matrix is at least two thirds of its dimension,
we use Lemma~\ref{zerlegungslemma}, which, roughly speaking, gives a way of decomposing any tree 
into pieces on which the Seifert form is positive definite. We always identify the planar tree $\Gamma$ with its associated positive Hopf plumbing. When we write $\sigma(\Gamma)$ or $b_1(\Gamma)$, 
we mean the signature or the first Betti number of the associated Hopf plumbing. 
Actually, $b_1(\Gamma)$ is equal to the number of vertices of $\Gamma$.   

\begin{lem}
\label{zerlegungslemma}
Any tree $\Gamma$ with at least six vertices has a subtree $\Gamma_0 \subset \Gamma$ with at least six vertices such that $\sigma(\Gamma) \ge \sigma(\Gamma - \Gamma_0) + b_1(\Gamma_0) -2$.
\end{lem}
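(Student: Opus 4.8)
The plan is to argue by a careful local analysis near a leaf of $\Gamma$, producing the subtree $\Gamma_0$ explicitly as a small neighborhood of a suitably chosen leaf, and then controlling the signature drop via the Seifert form. Since $\Gamma$ has at least six vertices, pick a longest path in $\Gamma$ and let $v$ be one of its endpoints, so $v$ is a leaf; walking inward along the path we encounter vertices $v=u_0, u_1, u_2, \dots$. The idea is that the string of vertices hanging off near $v$ forms a linear (type $A$) piece on which the Seifert matrix $S$ — which has $2$ on the diagonal and $1$ for adjacent vertices — is positive definite, because the $A_n$ Cartan-type matrix with diagonal $2$ is positive definite. So I would like to take $\Gamma_0$ to be a path of exactly six vertices $u_0,\dots,u_5$ sitting at the end of the longest path, provided the internal vertices $u_1,\dots,u_4$ have no other neighbors; in the generic case this gives $\Gamma - \Gamma_0$ again a tree (connected, since we cut at a single edge $\{u_5,u_6\}$), with $\sigma(\Gamma_0)=6=b_1(\Gamma_0)$.

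The key estimate is a general subadditivity-type inequality for the signature under this kind of decomposition: if $\Gamma$ is obtained from $\Gamma_0$ and $\Gamma-\Gamma_0$ by reconnecting along a single edge, then the Seifert matrix $S$ of $\Gamma$ is block form
\[
S=\begin{pmatrix} S_0 & E \\ E^{T} & S_1 \end{pmatrix},
\]
where $S_0,S_1$ are the Seifert matrices of $\Gamma_0$ and $\Gamma-\Gamma_0$ and $E$ has a single nonzero entry equal to $1$ (coming from the one edge joining the two pieces). Using that $S_0$ is positive definite of size $b_1(\Gamma_0)$, I would diagonalize the quadratic form by completing the square in the $\Gamma_0$-variables: the Schur complement $S_1 - E^{T}S_0^{-1}E$ is a rank-one perturbation of $S_1$, so its signature differs from $\sigma(\Gamma-\Gamma_0)$ by at most $2$. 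This yields $\sigma(\Gamma) = b_1(\Gamma_0) + \sigma\!\bigl(S_1 - E^{T}S_0^{-1}E\bigr) \ge b_1(\Gamma_0) + \sigma(\Gamma-\Gamma_0) - 2$, which is exactly the claimed bound. The only algebraic input needed is that a rank-one symmetric perturbation changes the signature by at most $2$, together with positive definiteness of the $A_n$ matrix, both of which are standard.

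The main obstacle is the case analysis forced by branching: the vertices $u_1,\dots,u_4$ near the chosen leaf need not have degree $\le 2$, so a clean six-vertex path at the end of $\Gamma$ may not exist. I would handle this by choosing $v$ among the leaves more cleverly — e.g., if some near-leaf vertex $u_i$ has an extra branch, that branch is itself a path (by maximality of the chosen path, since otherwise we could extend), and one can instead take $\Gamma_0$ to be a small star or a slightly larger but still positive-definite subtree (small $A$- or $D$-type pieces all have positive definite diagonal-$2$ Gram matrix) with exactly the required number of vertices, still attached to the rest along a single edge. The bookkeeping to guarantee (i) $\Gamma_0$ has at least six vertices, (ii) $\Gamma-\Gamma_0$ is again a tree, and (iii) $S_0$ is positive definite, in every branching configuration, is where the real work lies; the signature inequality itself then follows uniformly from the Schur-complement computation above. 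One subtlety to double check: in degenerate small cases $\Gamma-\Gamma_0$ could have fewer than six vertices (even be empty), in which case $\sigma(\Gamma-\Gamma_0)$ is interpreted as $0$ and the bound $\sigma(\Gamma)\ge b_1(\Gamma_0)-2$ must still hold — which it does, again by positive definiteness of $S_0$ and the rank-one perturbation bound.
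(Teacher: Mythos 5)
Your overall strategy --- split off a subtree $\Gamma_0$ with at least six vertices near an extremity of $\Gamma$ on which the Seifert form is positive definite, and control the loss by a Schur complement / rank-one perturbation bound --- is close in spirit to the paper's argument, and your computation in the clean case (a six-vertex path at the end of a longest path, attached by a single edge) is correct. But the part you defer as ``bookkeeping'' is where the lemma actually lives, and the assertion underlying your plan --- that in every branching configuration one can find a positive definite $\Gamma_0$ with at least six vertices attached to the rest along a single edge (or separated from it by a single cut vertex) --- is false. Up to signs of basis vectors the form is the Cartan form, so the positive definite trees are exactly the ADE diagrams; as soon as the extremity of $\Gamma$ contains a vertex $v$ with four pendant leaves (so that $v$ together with its leaves is the singular affine star $K_{1,4}=\widetilde{D}_4$), or three pendant leaves with $\deg(v')=2$, every admissible $\Gamma_0$ with at least six vertices contains $\widetilde{D}_4$ and is therefore indefinite. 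The minimal counterexample is $\Gamma=K_{1,5}$ itself: the only subtree with six vertices is $\Gamma$, its Gram matrix $2I+A$ has eigenvalues $2\pm\sqrt{5},2,2,2,2$, hence signature $4$, and no positive-definiteness argument is available; yet the lemma demands exactly $\sigma(\Gamma)\ge b_1(\Gamma)-2=4$.

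In these configurations (Cases 3 and 5 of the paper's proof) one cannot afford both the deficit of $\Gamma_0$ and a further loss of $2$ from the perturbation: your uniform estimate would give $\sigma(\Gamma)\ge\sigma(\Gamma-\Gamma_0)+\sigma(S_0)-2=\sigma(\Gamma-\Gamma_0)+b_1(\Gamma_0)-4$, which is too weak. The paper instead performs an explicit change of basis showing that the six-vertex piece contributes exactly $4=b_1(\Gamma_0)-2$ to the signature \emph{and} decouples completely from the rest, so that nothing further is lost; these are also precisely the configurations responsible for the sharpness of the $2/3$ bound, so they cannot be argued away. Two smaller points: maximality of a longest path does not force a branch hanging off a near-end vertex to be a path (it only bounds its depth, and it may still branch; the paper avoids this by taking $v$ outermost among vertices of degree at least three, so that all subtrees beyond $v$ are genuine paths), and $\Gamma-\Gamma_0$ is in general attached to $\Gamma_0$ through several edges meeting at one cut vertex, so the single-edge Schur complement should be replaced by restricting to the codimension-one subspace omitting that vertex, which costs the same $-2$ but only requires $\Gamma_0$ minus the cut vertex to be positive definite.
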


\begin{proof}[Proof of Theorem~\ref{treesignature}]
Let $\Gamma$ be a finite tree. Apply Lemma~\ref{zerlegungslemma} first to $\Gamma$, then to some tree of the forest $\Gamma - \Gamma_0$, etc. 
Apply Lemma~\ref{zerlegungslemma} as often as possible, say $r$ times, until the remaining forest does not have a tree with six or more vertices. 
Let $\Gamma_{0,i}$ be the subtree we obtain by the i-th use of Lemma~\ref{zerlegungslemma} and define the forest $\Gamma_{1,i} = \Gamma_{1,i-1} - \Gamma_{0,i}$ recursively, where $\Gamma_{0,1} = \Gamma_0$ and $\Gamma_{1,0} = \Gamma$. 
By Lemma~\ref{zerlegungslemma}, we get
$$\sigma(\Gamma) \ge \sigma(\Gamma_{1,1}) + (b_1(\Gamma_{0,1})-2) \ge ... \ge \sigma(\Gamma_{1,r}) + \sum_{i=1}^r (b_1(\Gamma_{0,i})-2).$$
It is easily checked that for any tree $\Gamma$ with at most five vertices, either $\sigma(\Gamma) = b_1(\Gamma)$ or $\sigma(\Gamma) = 4$. Since $\Gamma_{1,r}$ is a forest consisting only of trees with at most five vertices, we get that $\sigma(\Gamma_{1,r}) \ge \frac{4}{5}b_1(\Gamma_{1,r}) \ge \frac{2}{3}b_1(\Gamma_{1,r})$. 
Furthermore, since $b_1(\Gamma_{0,i}) \ge 6$, we have that $b_1(\Gamma_{0,i})-2 \ge \frac{2}{3}b_1(\Gamma_{0,i})$. This yields
$$\sigma(\Gamma_{1,r}) + \sum_{i=1}^r (b_1(\Gamma_{0,i})-2) \ge \frac{2}{3}(b_1(\Gamma_{1,r}) + \sum_{i=1}^r b_1(\Gamma_{0,i}))=\frac{2}{3}b_1(\Gamma).$$ 
Piecing all the inequalities together, we get that the signature $\sigma(\Gamma)$ is at least two thirds of the first Betti number $b_1(\Gamma)$, as desired.
\end{proof}

\begin{proof}[Proof of Lemma~\ref{zerlegungslemma}]
Let $\Gamma$ be a tree with at least six vertices. We choose a root for $\Gamma$ and orient all the edges away from the root. Let $v$ be a vertex that is outermost among the vertices of degree at least three. Every edge pointing away from $v$ defines a subtree of $\Gamma$ with only vertices of degree at most two: the maximal subtree containing the edge and $v$ but no other edge adjacent to $v$. Let $n = \text{deg}(v) - 1$ denote the number of such subtrees. Furthermore, let $k$ be the number of vertices outside (further away from the root) of $v$, let $v'$ be the vertex which is adjacent to $v$ but closer to the root and define $v''$ and $v'''$ analogously to $v'$, see Fig.~\ref{zerlegungsbeispiel}.

\begin{figure}[h]
  \def\svgwidth{170pt}
  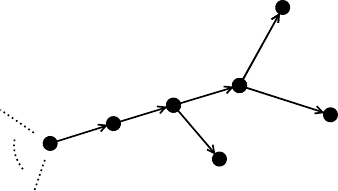
  \caption{}
  \label{zerlegungsbeispiel}
\end{figure}

\textit{Case 1: $k \ge 5$.} Let $\Gamma_0$ be the union of the $n$ subtrees specified above. Since on $\Gamma_0 - v$, the Seifert form is positive definite, the statement holds.

\textit{Case 2: $k = 4, n \le 3$.} Let $\Gamma_0$ be as in Case~1, but add the vertex $v'$ and the corresponding edge. Since on $\Gamma_0 - v'$, the Seifert form is positive definite, the statement holds. Note that in this case, $\Gamma - \Gamma_0$ need not be connected.

\textit{Case 3: $k = n = 4$.} Let $\Gamma_0$ be as in Case~2. Since the Seifert form is not positive definite on $\Gamma_0 - v'$, we cannot proceed as in Case~2.
The Seifert form of $\Gamma$ is given by the matrix

\begin{gather*}
  \begin{blockarray}{crrrrrrrrc}
    \begin{block}{(crr|rrrrrrc)}
      \ddots & \vdots & \vdots & \vdots & \vdots & \vdots & \vdots & \vdots & \vdots\\
      \cdots & \ast & \ast & \ast & 0 & 0 & 0 & 0 & 0 &\\ 
      \cdots & \ast & 2 & \ast & 0 & 0 & 0 & 0 & 0\\  \BAhhline{-------------~}
      \cdots & \ast & \ast & 2 & 1 & 0 & 0 & 0 & 0\\
      \cdots & 0 & 0 & 1 & 2 & 1 & 1 & 1 & 1\\
      \cdots & 0 & 0 & 0 & 1 & 2 & 0 & 0 & 0\\
      \cdots & 0 & 0 & 0 & 1 & 0 & 2 & 0 & 0\\
      \cdots & 0 & 0 & 0 & 1 & 0 & 0 & 2 & 0\\
      \cdots & 0 & 0 & 0 & 1 & 0 & 0 & 0 & 2\\
    \end{block}
  \end{blockarray}
\sim
  \begin{blockarray}{crrrrrrrrc}
    \begin{block}{(crr|rrrrrrc)}
      \ddots & \vdots & \vdots & \vdots & \vdots & \vdots & \vdots & \vdots & \vdots\\
      \cdots & \ast & \ast & \ast & 0 & 0 & 0 & 0 & 0 &\\ 
      \cdots & \ast & 2 & \ast & 0 & 0 & 0 & 0 & 0\\  \BAhhline{-------------~}
      \cdots & \ast & \ast & 2 & 1 & 0 & 0 & 0 & 0\\
      \cdots & 0 & 0 & 1 & 0 & 0 & 0 & 0 & 0\\
      \cdots & 0 & 0 & 0 & 0 & 2 & 0 & 0 & 0\\
      \cdots & 0 & 0 & 0 & 0 & 0 & 2 & 0 & 0\\
      \cdots & 0 & 0 & 0 & 0 & 0 & 0 & 2 & 0\\
      \cdots & 0 & 0 & 0 & 0 & 0 & 0 & 0 & 2\\
    \end{block}
  \end{blockarray},
\end{gather*}
where the bottom-right block corresponds to the restriction of the Seifert form to $\Gamma_0$, the top-left block to the restriction of the the Seifert form to $\Gamma - \Gamma_0$ and $\sim$ denotes a change of base.
By changing base again, we get that the Seifert form can be expressed by the matrix

\begin{gather*}
  \begin{blockarray}{crrrrrrrrc}
    \begin{block}{(crr|rrrrrrc)}
      \ddots & \vdots & \vdots & \vdots & \vdots & \vdots & \vdots & \vdots & \vdots\\
      \cdots & \ast & \ast & {0} & 0 & 0 & 0 & 0 & 0 &\\ 
      \cdots & \ast & 2 & {0} & 0 & 0 & 0 & 0 & 0\\  \BAhhline{-------------~}
      \cdots & {0} & {0} & 2 & 1 & 0 & 0 & 0 & 0\\
      \cdots & 0 & 0 & 1 & 0 & 0 & 0 & 0 & 0\\
      \cdots & 0 & 0 & 0 & 0 & 2 & 0 & 0 & 0\\
      \cdots & 0 & 0 & 0 & 0 & 0 & 2 & 0 & 0\\
      \cdots & 0 & 0 & 0 & 0 & 0 & 0 & 2 & 0\\
      \cdots & 0 & 0 & 0 & 0 & 0 & 0 & 0 & 2\\
    \end{block}
  \end{blockarray}
\sim
  \begin{blockarray}{crrrrrrrrc}
    \begin{block}{(crr|rrrrrrc)}
      \ddots & \vdots & \vdots & \vdots & \vdots & \vdots & \vdots & \vdots & \vdots\\
      \cdots & \ast & \ast & 0 & 0 & 0 & 0 & 0 & 0 &\\ 
      \cdots & \ast & 2 & 0 & 0 & 0 & 0 & 0 & 0\\  \BAhhline{-------------~}
      \cdots & 0 & 0 & 2 & 0 & 0 & 0 & 0 & 0\\
      \cdots & 0 & 0 & 0 & -\frac{1}{2} & 0 & 0 & 0 & 0\\
      \cdots & 0 & 0 & 0 & 0 & 2 & 0 & 0 & 0\\
      \cdots & 0 & 0 & 0 & 0 & 0 & 2 & 0 & 0\\
      \cdots & 0 & 0 & 0 & 0 & 0 & 0 & 2 & 0\\
      \cdots & 0 & 0 & 0 & 0 & 0 & 0 & 0 & 2\\
    \end{block}
  \end{blockarray}.
\end{gather*}
Since the changes of base we applied never changed the top-left block, we get that $\sigma(\Gamma) = \sigma(\Gamma - \Gamma_0) + 4$. 

\textit{Case 4: $k=3$, $n =2$, $\text{\emph{deg}}(v')=2$.} Let $\Gamma_0$ be as in Case~2 but add the vertex $v''$ and the corresponding edge. Since on $\Gamma_0 - v''$, the Seifert form is positive definite, the statement holds. Again, $\Gamma - \Gamma_0$ need not be connected.

\textit{Case 5: $k=n=3$, $\text{\emph{deg}}(v')=2$.} Let $\Gamma_0$ be as in Case~4. This works very similar to Case~3. Writing down a matrix for the Seifert form of $\Gamma$ with the Seifert form restricted to $\Gamma_0$ in the bottom-right block and applying a change of base, we get that $\sigma(\Gamma) = \sigma(\Gamma - \Gamma_0) + 4$.

\textit{Case 6: $k=n=2$, $\text{\emph{deg}}(v')=\text{\emph{deg}}(v'')=2$.} Let $\Gamma_0$ be as in Case~4 but add the vertex $v'''$ and the corresponding edge. Since on $\Gamma_0 - v'''$, the Seifert form is positive definite, the statement holds. Again, $\Gamma - \Gamma_0$ need not be connected.

\textit{Case 7: none of the other cases apply}. If three or four vertices lie outside of $v'$, then let $\Gamma_0$ be as in Case~4. Since Case~6 does not apply, at least five vertices lie outside of $v''$. Since none of the other cases apply, it is easily checked that on $\Gamma_0 - v''$, the Seifert form is positive definite and the statement holds. If at least five vertices lie outside of $v'$, then let $\Gamma_0$ be as in Case~2. Again none of the other cases apply, the Seifert form is positive definite on $\Gamma_0 - v'$ and the statement holds. Once more, $\Gamma - \Gamma_0$ need not be connected.
\end{proof}

\begin{remark}
\label{optimality}
 The optimality of Theorem~\ref{treesignature} follows directly from Case~5 in the proof of Lemma~\ref{zerlegungslemma}. The signature of the link
 corresponding to the tree dealt with in this case is~4, while its first Betti number is~6. By the reasoning in the proof, 
 glueing such a tree to another tree always adds~4 to the signature and~6 to the first Betti number. 
 Like this, one always obtains a tree with signature equal to exactly two thirds of the first Betti number. This constructions yields
 links of arbitrarily high genus. Interestingly, one can show that these examples have topological four-ball first Betti number equal to the signature.
 This leads to the question whether this holds for any tree-like positive Hopf plumbing. This and similar questions will be subject to future research.

\begin{question}
 Is the topological four-ball first Betti number of any tree-like positive Hopf plumbing equal to the signature?
\end{question}
\end{remark}


\section{Divides} 
A matrix $S$ of a Seifert form of a given divide link can be calculated as described in~\cite{B-D1}. As a basis of the first homology of the fiber surface, take the core curves $\alpha_i$ of the positive Hopf bands used in Ishikawa's plumbing constructions~\cite{Is}. Thus, the basis consists of one curve for each inner face and one curve for each double point of the divide. 
Drawing pictures of the various situations, one obtains $S_{ii} = 2$ for all $i$ and $S_{ij} = n$ if $\alpha_i$ and $\alpha_j$ are \textit{n}-fold adjacent,
where \textit{n-fold adjacency} is defined as follows. Two curves corresponding to inner faces are called \textit{n-fold adjacent}, if the inner faces have $n$ common edges. A curve corresponding to a double point and a curve corresponding to an inner face are called \textit{n-fold adjacent} if the double point occurs $n$ times in the boundary of the inner face. Two curves corresponding to different double points are \textit{0-fold adjacent}. 

It is a rather simple result that the signature of any nontrivial divide knot is strictly positive and we only give a sketch of the proof. Since two curves corresponding to different double points are 0-fold adjacent, the Seifert form is positive definite on the subspace of the first homology spanned by the curves corresponding to all the double points. An Euler characteristic argument shows that these account for exactly half the genus. This shows that the signature is greater or equal to zero. To show strict positivity, one again uses an Euler characteristic argument to find an inner face $F$ with at most three double points on its boundary. Since the matrices
$$\begin{pmatrix}
 2 & 1 & 1 & 1\\
 1 & 2 & 0 & 0\\
 1 & 0 & 2 & 0\\
 1 & 0 & 0 & 2
\end{pmatrix},
\begin{pmatrix}
 2 & 1 & 1 \\
 1 & 2 & 0 \\
 1 & 0 & 2 
\end{pmatrix},
\begin{pmatrix}
 2 & 1 \\
 1 & 2 
\end{pmatrix} \text{and}
\begin{pmatrix}
 2 
\end{pmatrix}
$$
are positive definite, the Seifert form is still positive definite on the subspace spanned by the curves corresponding to all the double points and the curve corresponding to $F$.
Thus, the signature of any nontrivial divide knot is strictly positive. In fact, one can easily adapt this proof to show that the signature of any nontrivial divide link is  also strictly positive.

However, we focus on the fact that this result is optimal. For any number~$n\ge 1$, Proposition~\ref{schnecken} provides an example of a divide with $n$ double points such that the 
signature of the corresponding knot is equal to two. Since the number of double points of a divide is equal to the genus of the associated divide knot, this
shows that the signature of divide knots cannot be linearly bounded from below by the genus. 

\begin{figure}
\hfill
\includegraphics [scale = 0.25] {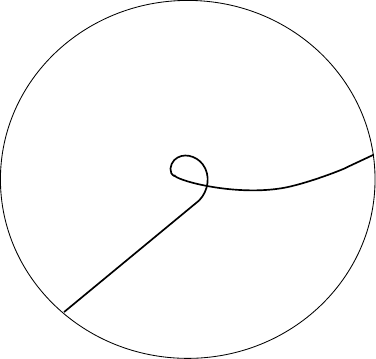} \hfill
\includegraphics [scale = 0.25] {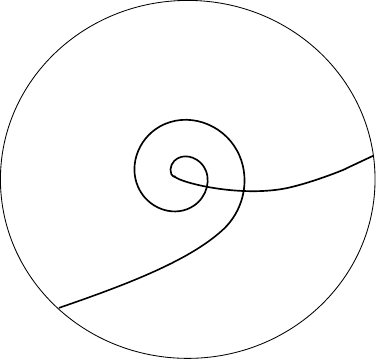} \hfill 
\includegraphics [scale = 0.25] {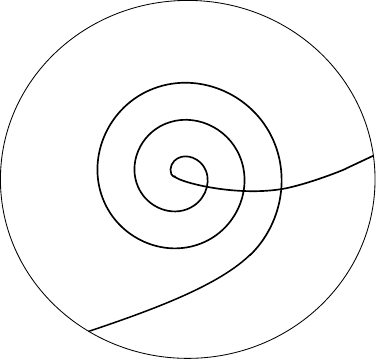} \hfill
\includegraphics [scale = 0.25] {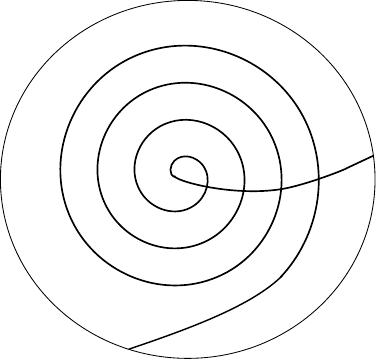} \hfill
etc.
\caption{}
\label{signature2}
\end{figure}

\begin{prop}
\label{schnecken}
The signature of any divide knot of the family depicted in Fig.~\ref{signature2} is equal to two.
\end{prop}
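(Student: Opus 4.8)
The plan is to compute the symmetrized Seifert matrix $S_n$ of the $n$-th member of the family from the adjacency rules recalled above, and then to determine its signature by a sequence of congruences, in the same spirit as the proof of Lemma~\ref{zerlegungslemma}. First I would read off from Fig.~\ref{signature2} the combinatorics of the $n$-th snail. It has $n$ double points $d_1,\dots,d_n$, and since the number of double points of a divide equals the genus of the associated knot, $b_1=2n$; hence there are also $n$ inner faces $f_1,\dots,f_n$, which I order along the spiral, say from the outside inwards. Two curves coming from distinct double points are $0$-fold adjacent, so $S_n$ restricted to the span of the $d_i$ is $2I_n$; and because each new turn of the spiral meets only the previous one, the face curve $f_i$ is adjacent only to $f_{i-1}$, $f_{i+1}$ and to at most a couple of the $d_j$ with $j$ near $i$ (with the pattern degenerating at the two ends of the spiral). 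Thus, in the basis $(d_1,\dots,d_n,f_1,\dots,f_n)$,
\[ S_n=\begin{pmatrix} 2I_n & B_n\\ B_n^{\top} & C_n\end{pmatrix},\]
where $B_n$ and $C_n$ are banded with bandwidth bounded independently of $n$ and $C_n$ has $2$ along the diagonal. The first thing to pin down precisely is the value of the nonzero entries of $B_n$ and $C_n$, and especially the exceptional entries at the center and at the boundary turn of the spiral.

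Since the block $2I_n$ is positive definite, a Schur-complement congruence gives $\sigma(S_n)=n+\sigma(M_n)$ with $M_n=C_n-\tfrac12 B_n^{\top}B_n$, again an $n\times n$ symmetric banded matrix. So it suffices to prove that $\sigma(M_n)=2-n$, that is, that $M_n$ has exactly one positive and $n-1$ negative eigenvalues for every $n$. I would establish this by induction on $n$: adding the innermost turn of the spiral appends one row and column to $M_{n-1}$ while modifying only one diagonal entry of it, and a Sturm-type recursion for the leading principal minors of a banded matrix (Jacobi's criterion) then shows that the sign pattern of the minor sequence has exactly $n-1$ changes. Equivalently, and perhaps more transparently, one runs the induction directly on $S_n$: the curve $d_n$ has positive self-pairing and is coupled to only one earlier basis curve, so eliminating it adds $+1$ to the signature and changes a single diagonal entry, and one checks that after also eliminating $f_n$ the remaining form is $S_{n-1}$ with exactly the same diagonal entry modified --- i.e.\ the modification has a fixed point --- which closes the induction. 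The base case $n=1$ is the left-handed trefoil, whose symmetrized Seifert form $\bigl(\begin{smallmatrix}2&1\\1&2\end{smallmatrix}\bigr)$ has signature $2$.

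I expect the genuine difficulty to be the bookkeeping rather than any deep idea: extracting the exact adjacency multiplicities from Fig.~\ref{signature2} --- in particular at the two places where the repeating pattern of the spiral breaks --- and keeping the signs of the pivots straight throughout the congruences, since a single misidentified pivot sign would change the answer. Computing $n=2$ and $n=3$ by hand first should fix both the boundary data and the initial segment of the minor sequence, after which the recursion is uniform and the claim $\sigma=2$ follows for all $n$; the remaining steps are routine linear algebra of exactly the type already used in Section~2.
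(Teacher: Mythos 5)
Your setup is identical to the paper's: the same basis (double-point curves, then face curves), the same block form with the positive definite corner block $2I_n$, and the same first congruence reducing the problem to the signature of the $n\times n$ Schur complement $M_n=C_n-\tfrac12 B_n^{\top}B_n$. The genuine gap is that everything after this reduction is deferred rather than done. The entries of $B_n$ and $C_n$ are never pinned down (in the actual matrices there are $2$'s, not just $1$'s: each double point meets one of its neighbouring faces with multiplicity two, and consecutive faces along the spiral share two edges), and the sign pattern of the leading principal minors of $M_n$ --- which is the entire content of the Jacobi/Sturm argument you propose --- is never established. Moreover, the ``more transparent'' variant you sketch rests on a claim that fails as stated: the face curve $f_n$ is coupled to $d_{n-2}$, $d_{n-1}$, $d_n$ and $f_{n-1}$ (with multiplicities $1,2,1,2$), so after eliminating $d_n$ it is still coupled to three remaining generators, and eliminating it perturbs a $3\times 3$ block of the residual form, not ``a single diagonal entry''; the asserted fixed point of the modification is therefore not self-evident and is exactly where a pivot sign could go wrong. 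As you note yourself, one misidentified sign flips the answer, so the proof is not complete until these computations are carried out.

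For comparison, the paper avoids tracking all the pivots. It writes $B_n$ and $D_n$ explicitly, uses the same block factorization only to get $\det S_{2n}=(-1)^n\det\bigl(B_n^{\top}B_n-2D_n\bigr)$, brings the pentadiagonal matrix $B_n^{\top}B_n-2D_n$ into upper triangular form to see its determinant is always negative (so $\det S_{2n}$ has sign $(-1)^{n+1}$, in particular $S_{2n}$ is nonsingular), and then inducts by Cauchy interlacing: $S_{2n}$ sits inside $S_{2(n+1)}$ as a principal submatrix obtained by deleting two rows and columns, so the counts of positive and negative eigenvalues each grow by at most two, and since the determinant changes sign, exactly one new positive and one new negative eigenvalue appear; hence the signature is constant, equal to its value $2$ computed directly for $S_4$. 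To salvage your version you would either have to run the full minor recursion for $M_n$ with the correct boundary data, or adopt this determinant-plus-interlacing shortcut, which replaces the whole Sturm sequence by a single scalar sign computation per $n$.
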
 

\begin{proof}
Choose the following basis of the first homology of the fiber surface. First take the curves corresponding to the double points from inside out, then take the curves corresponding to the inner faces from inside out. For this choice of basis, the Seifert form of the divide with $n$ crossings is given by the $2n\times 2n$ matrix
$$
S_{2n} =
\begin{pmatrix}
 A_n & B_n\\
 B_n^t & D_n
\end{pmatrix},
$$ 
where 
$$A_n = 
\begin{pmatrix}
 2  \\
  & \ddots \\
  & & 2
\end{pmatrix}, 
B_n = 
\begin{pmatrix}
1& 2 & 1 \\
& \ddots & \ddots & \ddots \\
&  & 1 & 2 & 1 \\
&  & & 1 & 2 \\
&  & & & 1
\end{pmatrix},$$ $$
D_n = 
\begin{pmatrix}
2 & 1 &  \\[0.4em]
1 & 2 & 2 \\
   & 2 & 2 & \ddots  \\
& & \ddots & \ddots & 2 \\[0.4em]
& & & 2 & 2
\end{pmatrix}.
$$
Since $A_n$ is invertible, the formula
\begin{align*} S_{2n} = 
\begin{pmatrix}
 A_n & B_n\\
 B_n^t & D_n
\end{pmatrix} =
\begin{pmatrix}
 A_n & 0\\
 B_n^t & Id
\end{pmatrix}
\begin{pmatrix}
 Id & A_n^{-1}B_n\\
 0& D_n - B_n^tA_n^{-1}B_n
\end{pmatrix}
\end{align*}
holds. Furthermore, since $A_n$ is a scalar matrix, it certainly commutes with $B_n$ and we obtain
\begin{align*}\det(S_{2n}) &= \det(A_n)\det(D_n - B_n^tA_n^{-1}B_n) 
\\ &= (-1)^n\det(B_n^tB_n - A_nD_n).\end{align*}
Calculating $B_n^tB_n - A_nD_n$ yields the matrix

$$
\begin{pmatrix}
-3 & 0 & 1 \\
0 & 1 & 0 & \ddots \\
1 & 0 & 2 & \ddots &  1 \\
 & \ddots & \ddots & \ddots & 0\\
 &  & 1 & 0 & 2
\end{pmatrix},
$$ \newline
which is easily brought into upper triangular form. This upper triangular form then shows that the determinant of $B_n^tB_n - A_nD_n$ is always negative. Thus, we have that $\text{sign}(\det(S_{2n})) = (-1)^n(-1) = (-1)^{n+1}$. Now we conclude the proof by induction over $n$. A quick calculation shows that $S_4$ has signature two. Now assume that the signature of $S_{2n}$ is equal to two. The determinant of $S_{2(n+1)}$ is of opposite sign than the determinant of $S_{2n}$. Additionally, $S_{2(n+1)}$ contains $S_{2n}$ as a minor. Thus, in addition to the eigenvalues of $S_{2n}$, $S_{2(n+1)}$ has exactly one positive and one negative eigenvalue. In particular, the signatures of $S_{2(n+1)}$ and $S_{2n}$ are equal. 
\end{proof}


\section{Hopf vs. trefoil plumbing}
It was shown by Giroux and Goodman that the fiber surface of any fiberd link is obtained from the disc by consecutively plumbing and deplumbing some Hopf bands~\cite{G-G}. 
They include a remark suggesting that for fiberd knots, their plumbing and deplumbing operations can be made two by two, such that the intermediate steps are always fiberd knots. 
Thus, the fiber surface of any fiberd knot would be a plumbing and deplumbing of trefoil and figure eight fibers. We give an example that shows that the deplumbing operation is necessary even in the case where the fiber surface is actually a plumbing of Hopf bands.  

Since a plumbing of two surfaces is quasipositive if and only if the two surfaces are quasipositive~\cite{Ru}, any plumbing of positive Hopf bands is quasipositive.
Furthermore, the only way of obtaining such a surface as a plumbing of trefoil and figure eight fibers is as a plumbing of positive trefoil fibers.
In Example~\ref{keintrefoilplumbingbeispiel}, we describe a fiberd knot whose fiber surface is a plumbing of four positive Hopf bands but not a plumbing of two positive trefoil fibers. 
\begin{lem}
\label{trefoilplumbingsignature}
Plumbing a Hopf band changes the signature by at most one
and plumbing a positive trefoil fiber never reduces the signature.
\end{lem}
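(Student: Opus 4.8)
The plan is to carry out the whole argument with the symmetrised Seifert form $Q=V+V^{t}$, whose signature is $\sigma$, using two standard tools. The first is the behaviour of the Seifert form under plumbing: if $F'=F_{1}*F_{2}$ is a Murasugi sum along a square, then $H_{1}(F')=H_{1}(F_{1})\oplus H_{1}(F_{2})$, and in a basis adapted to this splitting a Seifert matrix of $F'$ is block triangular with the Seifert matrices of $F_{1}$ and $F_{2}$ on the diagonal; in particular the symmetrised Seifert form $Q'$ of $F'$ restricts on each summand $H_{1}(F_{i})$ to the symmetrised Seifert form $Q_{i}$ of $F_{i}$. The second is Cauchy interlacing, in the form: restricting a symmetric bilinear form to a subspace of codimension one changes the signature by at most one, because the numbers of positive and of negative eigenvalues each change by at most one.

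For the first assertion, let $F'=F*H$ be obtained by plumbing a Hopf band $H$. Then $\dim H_{1}(F')=\dim H_{1}(F)+1$, and by the first tool $Q'$ restricts on the codimension-one subspace $H_{1}(F)$ to the form $Q$ of $F$. Interlacing gives $|\sigma(F')-\sigma(F)|=|\sigma(Q')-\sigma(Q)|\le 1$. This uses nothing about the sign of the plumbed band, and since the estimate is symmetric in $F$ and $F'$ it equally bounds the effect of deplumbing a Hopf band.

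For the second assertion the naive interlacing estimate only yields $\sigma(F')\ge\sigma(F)-2$, so I must use that $T$ is the \emph{positive} trefoil fiber, i.e.\ that its symmetrised Seifert form $Q_{T}$ is positive definite (equivalently $\sigma(T)=b_{1}(T)=2$). Set up the plumbing $F'=F*T$ so that, writing $T=H_{1}*H_{2}$ as a plumbing of two positive Hopf bands with cores $\alpha_{1},\alpha_{2}$ meeting once, the surface $F$ is attached along a square inside $H_{1}$ disjoint from the square along which $H_{2}$ is attached; then the curve $\alpha_{2}$, together with its surface framing, can be isotoped off $F$ and off the region where curves in $F$ are pushed off, so the class $[\alpha_{2}]\in H_{1}(T)\subset H_{1}(F')$ is $Q'$-orthogonal to all of $H_{1}(F)$. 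Since $Q'([\alpha_{2}],[\alpha_{2}])=Q_{T}([\alpha_{2}],[\alpha_{2}])>0$, the form $Q'$ restricted to the codimension-one subspace $H_{1}(F)\oplus\langle[\alpha_{2}]\rangle$ of $H_{1}(F')$ is the orthogonal direct sum of $Q$ and a positive-definite one-dimensional form, hence has signature $\sigma(F)+1$; interlacing then gives $\sigma(F')\ge(\sigma(F)+1)-1=\sigma(F)$.

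The linear algebra here is immediate, so the real work — and the step I expect to be the main obstacle — is the geometric input: that the Seifert form of a plumbing restricts to the pieces on the homology summands, and that the plumbing of a positive trefoil fiber can be normalised so that one of the two Hopf-band cores of $T$ is pushed off $F$ and decouples from $H_{1}(F)$ in $Q'$. Both follow from the definition of Stallings plumbing together with the freedom in choosing the plumbing square inside $T=H_{1}*H_{2}$, and this is the part I would write out carefully; the rest is formal.
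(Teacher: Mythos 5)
Your proposal is correct and follows essentially the same route as the paper: the first claim via the block-triangular Seifert matrix of a Murasugi sum plus eigenvalue interlacing, and the second via the observation that one core curve of the trefoil fiber can be arranged to be disjoint from (hence Seifert-orthogonal to) the surface it is plumbed onto, contributing a positive-definite one-dimensional summand before the final interlacing step.
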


\begin{proof}
Note that by choosing bases correctly, the matrix of the Seifert form before the plumbing is a minor of the matrix of the Seifert form after the plumbing. 
Since plumbing a Hopf band changes the first Betti number by one, the first statement follows. 
Similarly, the second statement follows from the fact that only one of the core curves of the two positive Hopf bands forming the positive trefoil fiber touches the surface the positive trefoil fiber is plumbed onto. 
\end{proof}

\begin{figure}
\begin{center}
\includegraphics [scale = 0.47] {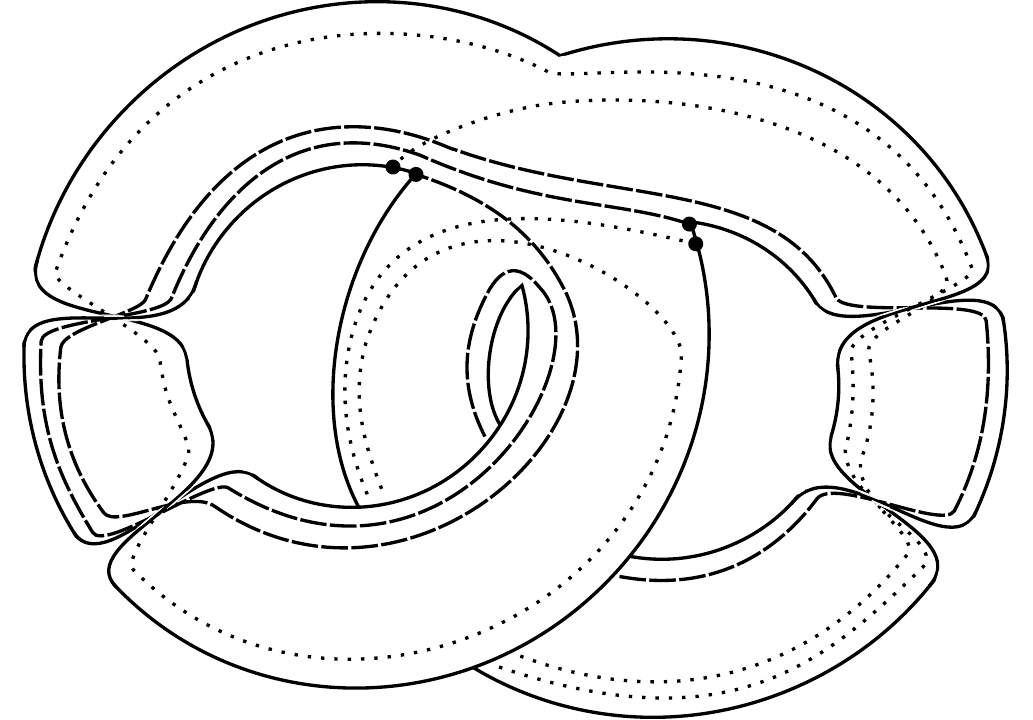}
\end{center}
\caption{}
\label{keintrefoilplumbing}
\end{figure}

\begin{example}
\label{keintrefoilplumbingbeispiel}
Fig.~\ref{keintrefoilplumbing} shows the fiber surface of the left-handed trefoil knot and two embedded intervals with endpoints on the boundary of the surface. 
Every such embedded interval describes a Hopf plumbing. Now plumb a positive Hopf band first along the dashed interval and then another one along the dotted interval. 
By choosing the suitably oriented core curves of the plumbed Hopf bands as a basis of the first homology, the entries of the matrix for the Seifert form become just the intersection numbers of the core curves. 
Calculating these yields the matrix  
$$
\begin{pmatrix}
 2 & 1 & 3 & 2\\
 1 & 2 & 2 & 3\\
 3 & 2 & 2 & 4\\
 2 & 3 & 4 & 2
\end{pmatrix},
$$
which has signature equal to zero. Since a plumbing of positive trefoil fibers has strictly positive signature by Lemma~\ref{trefoilplumbingsignature}, 
the surface obtained after plumbing positive Hopf bands along the dashed and the dotted interval is not a plumbing of positive trefoil fibers. 
\end{example}

For specific classes of fiberd knots, however, the deplumbing operation need not be necessary. 
For example, it was shown by Baader and Dehornoy that the fiber surface of any positive braid knot is a plumbing of positive trefoil fibers~\cite{B-D2}. Another example of positive trefoil plumbings are slalom knots. In fact, Lewark showed that a tree-like positive Hopf plumbing has one boundary component if and only if it is actually a plumbing of positive trefoil fibers~\cite{Lew}.

One can show that the examples provided by Proposition~\ref{schnecken} are plumbings of positive trefoil fibers. By Lemma~\ref{trefoilplumbingsignature}, these examples actually minimise 
the signature among plumbings of positive trefoil fibers. 

\begin{question}
 \label{Qtrefoilplumbing}
 Is the fiber surface of any divide knot a plumbing of positive trefoil fibers?
\end{question}

\section{Coxeter systems and the location of zeroes of the Alexander polynomial}
In 2002, Hoste conjectured the following result on the location of zeroes of the Alexander polynomial of alternating knots. 
\begin{conjecture}[J. Hoste, 2002]
The real part of any zero of the Alexander polynomial of an alternating knot is strictly greater than -1.
\end{conjecture}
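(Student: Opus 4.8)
\medskip
\noindent\textbf{Proof proposal.}
The plan is to reformulate Hoste's conjecture as a Hurwitz-stability statement and then attack it through the combinatorics of the Tait graph, peeling off outermost pieces of a planar diagram much as Lemma~\ref{zerlegungslemma} peels off subtrees. By the Crowell--Murasugi theorem the coefficients of the Alexander polynomial $\Delta_K$ of an alternating knot $K$ strictly alternate in sign, so $\Delta_K$ has no zero on the negative real axis -- for instance $\Delta_K(-1)$, which is the knot determinant up to sign, is an odd integer and hence nonzero -- and the real-axis part of the conjecture is automatic. The content is thus that the \emph{non-real} zeros of $\Delta_K$ avoid the closed half-plane $\{\mathrm{Re}(t)\le -1\}$; and since the substitution $t\mapsto -s-1$ carries that half-plane onto the closed right half-plane $\{\mathrm{Re}(s)\ge 0\}$, the conjecture is equivalent to the assertion that the polynomial $h_K(s):=\Delta_K(-s-1)$ is Hurwitz stable, i.e.\ all of its zeros lie in the open left half-plane.

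I would then bring in the combinatorial description of $\Delta_K$ for alternating knots. From a reduced alternating diagram with signed Tait graph $G$, the Alexander polynomial is, up to a unit, a one-variable specialisation of a state sum over $G$ -- a weighted sum over spanning trees of $G$ (the Kauffman states of the diagram), with the two checkerboard colours contributing differently -- and the reciprocal symmetry $\Delta_K(t)\doteq\Delta_K(t^{-1})$ reflects planar duality interchanging the colour classes. The goal is to produce a suitable \emph{multivariate} lift $Z_G$ of this state sum, defined for the planar graph $G$, that possesses a half-plane property (nonvanishing whenever all edge variables lie in a fixed open half-plane), and then to recover Hurwitz stability of $h_K$ by specialising the two colour classes to the boundary value $1$ and to the variable $s+1$ and invoking the standard closure operations (specialisation, multiplication, contraction) of the theory of stable polynomials, in the spirit of Choe--Oxley--Sokal--Wagner and Br\"and\'en. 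As a base case, and to calibrate which multivariate statement is the correct one, I would first run this for two-bridge knots, whose Tait graphs are chains of bundles of parallel edges and whose Alexander polynomials satisfy a continued-fraction recursion; there the required interlacing can be verified by hand via the Hermite--Biehler theorem, recovering the known case of the conjecture.

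The hard part -- and the reason the conjecture is still open -- is the stability lemma for $Z_G$. The region $\{\mathrm{Re}(t)>-1\}$ is not a half-plane through the origin, so after the substitution one colour class sits on the boundary of the relevant half-plane, and the correct weighting of the states is subtler than the basis generating polynomial of the graphic matroid (which does have the half-plane property but produces the wrong, essentially monomial, invariant for special alternating knots); pinning down a multivariate object that is simultaneously stable and specialises to $\Delta_K$ is the crux. Moreover there is no move on alternating diagrams -- adding a crossing, flyping, or a Murasugi-type splitting -- under which Hurwitz stability is manifestly preserved, because $\Delta_K$ is \emph{not} multiplicative under plumbing (the trefoil, a plumbing of two positive Hopf bands, has $\Delta = t^2-t+1$, not $(t-1)^2$); planarity must enter essentially, presumably through the interplay of $G$ with its dual, equivalently of the two checkerboard surfaces and their Goeritz forms. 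Finally, the same framework should apply to the mirror situation of the positive braid conjecture from the introduction, where $\Delta$ again admits a combinatorial model (from the brick diagram of the braid) and the predicted bound is $\mathrm{Re}(t)\le 1$; a uniform half-plane principle covering both the ``alternating'' and the ``positive'' families would be the natural endpoint, but establishing even one instance beyond two-bridge knots is where the real difficulty begins.
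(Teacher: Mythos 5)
This statement is not proved in the paper: it is Hoste's conjecture, quoted as an open problem (the paper only records partial results of Lyubich--Murasugi and Stoimenow for two-bridge knots), so there is no proof of the paper's to compare yours against. Your proposal, to its credit, is honest about this: it is a research programme, not a proof. The preliminary reductions you carry out are correct and standard -- by Crowell--Murasugi the coefficients of $\Delta_K$ alternate strictly in sign for an alternating knot, so there are no zeros on $(-\infty,0]$, and the substitution $t\mapsto -s-1$ does convert the conjecture into Hurwitz stability of $h_K(s)=\Delta_K(-s-1)$. But these steps are essentially cost-free; the entire mathematical content of the conjecture is concentrated in the ``stability lemma for $Z_G$'' that you explicitly leave open. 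No candidate multivariate polynomial $Z_G$ is defined, no half-plane property is established for it, and no mechanism is given by which planarity or the Tait-graph duality would force stability. As you yourself observe, the obvious candidate (the spanning-tree generating polynomial of the graphic matroid, which does have the half-plane property) does not specialise to $\Delta_K$, and there is no diagrammatic move under which Hurwitz stability is manifestly preserved. So the argument does not get off the ground beyond the two-bridge case, which was already known.

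Concretely: the gap is that the claimed implication ``there exists a stable multivariate lift $Z_G$ specialising to $\Delta_K$'' is asserted as a hope rather than established, and every subsequent step depends on it. Until such a $Z_G$ is exhibited and its half-plane property proved, nothing is proven about alternating knots with more than two bridges. If you want to make progress in the spirit of this paper, note that the paper's own technique (Theorem~\ref{treesignature} together with Theorem~A of the appendix) controls only \emph{how many} zeros lie on the unit circle via signature bounds; it says nothing about the real parts of the remaining zeros, which is exactly what Hoste's conjecture and Conjecture~\ref{nullstellen} ask about. Your observation that the alternating and positive-braid conjectures should be antipodal instances of one half-plane principle is a reasonable guiding heuristic, but it is not evidence for either.
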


In the restricted case of two-bridge knots, a first lower bound was proven by Lyubich and Murasugi~\cite{L-M}. 
Recently, this bound has been improved by Koseleff and Pecker~\cite{K-P} and independently by Stoimenow \cite{Sto2}.
Furthermore, Hirasawa and Murasugi constructed many examples of alternating links with all zeroes of the Alexander polynomial real and strictly positive~\cite{H-M}.
Interestingly, we get a very similar but antipodal result for tree-like positive Hopf plumbings. Since all the zeroes of the Alexander polynomial are either negative real or on the unit circle, we get that 
the real part of any zero of the Alexander polynomial is smaller or equal to 1 with strict inequality if and only if the link in question is actually a knot. 

Actually, the homological monodromy of any plumbing of positive Hopf bands whose core curves intersect at most once is, up to a sign, conjugate to some Coxeter transformation. 
Since the corresponding Coxeter graph need not be simply connected, there are, in general, several conjugacy classes of Coxeter transformations. 
If the Coxeter graph is bipartite, there is still a distinguished Coxeter transformation, the \textit{bicolored Coxeter transformation}, 
for which the eigenvalues are either positive real or lie on the unit circle, see e.g.~\cite{Mc}. 
The homological monodromy, however, has no particular reason to be in the conjugacy class of this bicolored Coxeter transformation. It is indeed not difficult to construct examples of positive braids such that 
the corresponding Coxeter graph is bipartite but the homological monodromy of the positive braid link has non-real eigenvalues outside the unit circle. 

\begin{question}
 What can be said about homological monodromies that are, up to sign, conjugate to some Coxeter transformation?
\end{question}

The distribution of zeroes of the Alexander polynomial of positive braids still seems very particular. 
See for example Figure~25 appearing towards the end of~\cite{De}, which shows the distributions of zeroes of the Alexander polynomial of random positive braids. 
Considering this figure leads to the following conjecture, again antipodal to Hoste's conjecture.

\begin{conj} 
\label{nullstellen}
The real part of any zero of the Alexander polynomial of a positive braid link $L$ is smaller or equal to 1 with strict inequality if and only if $L$ is a knot.
\end{conj}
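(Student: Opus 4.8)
The plan is to recast the statement in terms of the Seifert form and then to split into the \emph{non-isotropic} and \emph{isotropic} cases for the relevant eigenvector; only the latter carries genuine content.

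Fix a positive braid link $L$ with fiber surface $F$, let $V$ be a Seifert matrix, and set $A=V+V^{t}$ (the symmetric Seifert form of Section~3, with $2$'s on the diagonal) and $J=V-V^{t}$ (the intersection form on $H_1(F)$). Here $J$ is unimodular when $L$ is a knot and has corank $\mu-1$ when $L$ has $\mu$ components, its kernel being spanned by boundary classes that the monodromy fixes. Up to units, $\Delta_L(t)\doteq\det(tV-V^{t})$, so $\lambda\in\C$ is a zero of $\Delta_L$ exactly when $(\lambda V-V^{t})v=0$ for some $v\in\C^{n}\setminus\{0\}$. Thus it suffices to show: for every such pair $(\lambda,v)$ one has $\mathrm{Re}(\lambda)\le1$, and $\mathrm{Re}(\lambda)=1$ forces $\lambda=1$. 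Granting this, the ``iff'' is immediate from the classical facts $|\Delta_K(1)|=1$ for a knot $K$ (so $1$ is not a zero, and hence no zero has real part $1$) and $\Delta_L(1)=0$ for $\mu\ge2$ (so $\lambda=1$ is a zero with real part $1$).

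The non-isotropic case is easy and uses no positivity: suppose $w:=\bar v^{t}Vv\neq0$. Left-multiplying $\lambda Vv=V^{t}v$ by $\bar v^{t}$ and using $\bar v^{t}V^{t}v=v^{t}V\bar v=\overline{\bar v^{t}Vv}$ (because $V$ is real), one gets $\lambda w=\bar w$, hence $\lambda=\bar w/w$ and $|\lambda|=1$. So $\mathrm{Re}(\lambda)\le1$, with equality only for $\lambda=1$; and $\lambda=1$ gives $Jv=(V-V^{t})v=0$, impossible when $L$ is a knot. In particular, already for arbitrary fibered links, every Alexander zero whose Seifert-eigenvector is non-isotropic lies on the unit circle, and all the work is concentrated in the complementary \emph{isotropic} case $\bar v^{t}Vv=0$, equivalently $\bar v^{t}Av=0$ and $\bar v^{t}Jv=0$ simultaneously.

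For the isotropic case I would attempt an induction along a construction of $F$ as an iterated plumbing of positive Hopf bands: each plumbing adds one band, enlarges $H_1(F)$ by the class of the new core curve, and modifies the homological monodromy by composition with a positive symplectic transvection, hence modifies $\det(tV-V^{t})$ by a correction of bounded rank governed by the new core curve. One would like to show that this correction, \emph{because} the transvection is positive, never moves a zero into $\{\mathrm{Re}>1\}$ and moves the eigenvalue $1$ onto or off the line $\mathrm{Re}=1$ exactly according to the change in the number of components of the closure; concretely this reduces the conjecture to a purely algebraic assertion about products of positive transvections. Alternative routes would be to extend A'Campo's theorem [AC2] from forests to the weighted graphs (edge multiplicities $\ge1$) arising as intersection graphs of positive braids, at least for the real-part bound, using the identification of homological monodromies with signed Coxeter transformations recalled in the introduction, or to invoke the Baader--Dehornoy description [B-D2] of positive braid knots as plumbings of positive trefoil fibers together with Lemma~\ref{trefoilplumbingsignature}-style bookkeeping on how a positive trefoil plumbing displaces the zeros. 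In any case, the main obstacle is precisely this isotropic case: the clean half-plane argument collapses there, there is no a priori positivity estimate controlling eigenvectors on which $V$ vanishes, and the inductive bookkeeping around knot/link transitions (tracking the single eigenvalue $1$, which encodes the ``iff'') will require care. Supplying the missing positivity input --- equivalently, a replacement for A'Campo's Coxeter normal form valid for \emph{all} positive braids --- is, in my view, the crux.
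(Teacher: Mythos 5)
The statement you are trying to prove is stated in the paper as Conjecture~\ref{nullstellen}, not as a theorem: the paper offers no proof, only the heuristic evidence of Dehornoy's computed zero distributions for random positive braids. So there is no argument of the author's to compare yours against, and your proposal, by your own account, does not close the conjecture either. Your non-isotropic case is correct but carries no positivity content: the computation $\lambda w=\bar w$ for $w=\bar v^{t}Vv\neq 0$ is the classical observation that an Alexander zero whose eigenvector is non-isotropic for the Seifert form lies on the unit circle, valid for every link, and your handling of $t=1$ via unimodularity (resp.\ corank $\mu-1$) of $V-V^{t}$ is likewise standard. All of the conjectural content sits in the isotropic case, which you explicitly leave open.

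None of the three routes you sketch for that case survives contact with what is already in the paper. The paper itself points out that for positive braids the homological monodromy is conjugate, up to sign, to \emph{some} Coxeter transformation of the intersection graph, but not in general to the bicolored one, and that one can construct positive braids whose homological monodromy has non-real eigenvalues off the unit circle; this rules out any direct extension of A'Campo's theorem [AC2] to the weighted or non-tree intersection graphs of positive braids, which is your second route. The plumbing induction (your first route) founders on the fact that adding a positive Hopf band is a rank-one perturbation of $tV-V^{t}$ whose effect on the zero locus is not controlled by positivity in any way you make precise: the signature bookkeeping of Lemma~\ref{trefoilplumbingsignature} bounds how many zeros can leave the unit circle, not where they go, and a single zero escaping into $\{\mathrm{Re}>1\}$ would already falsify the statement. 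The trefoil-plumbing route via [B-D2] has the same defect. In short, the reduction you perform is sound but elementary, and the ``missing positivity input'' you identify at the end is not a gap in an otherwise complete proof --- it is the conjecture itself.
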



\section*{{Appendix. Signature and the Alexander polynomial}}

\begin{center}\textsc{Peter Feller and Livio Liechti}\end{center}
$ $

In this appendix, we prove that the absolute value of the signature of a link is a lower bound for the number of zeroes of the Alexander polynomial that lie on the unit circle. 
We believe that this result is known to a certain extent. For example, if the Alexander polynomial has only simple zeroes on the unit circle, it follows from a result of Stoimenow~\cite{Sto1}. 
However, we do not know of any reference providing the general statement. 

\begin{thmA}
The Alexander polynomial $\Delta_L(t)$ of any link $L$ is either identically zero or has at least $\vert \sigma(L)\vert$ zeroes (counted with multiplicity) on the unit circle.
\end{thmA}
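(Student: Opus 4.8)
The plan is to analyse the Tristram--Levine signature of $L$ as a function on the unit circle, and to show it cannot pass from the value $0$ near $\omega=1$ to the value $\sigma(L)$ at $\omega=-1$ without meeting enough zeros of $\Delta_L$. Fix a Seifert matrix $V$ of $L$, of size $n$, so that $\sigma(L)=\operatorname{sign}(V+V^{T})$ and $\Delta_L(t)\doteq\det(V-tV^{T})$, and assume $\Delta_L\not\equiv0$. For $\theta\in\R$ put
\[
  H(\theta)=(1-e^{i\theta})V+(1-e^{-i\theta})V^{T},
\]
a $2\pi$-periodic family of Hermitian matrices. Two elementary observations start the argument. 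First, $H(\pi)=2(V+V^{T})$, so $\operatorname{sign}(H(\pi))=\sigma(L)$. Second, factoring out the scalar $1-e^{-i\theta}$ gives $H(\theta)=(1-e^{-i\theta})(V^{T}-e^{i\theta}V)$, so for $\theta\notin2\pi\Z$ the matrix $H(\theta)$ is singular exactly when $e^{i\theta}$ is a zero of $\Delta_L$. It is also convenient to record that $H(\theta)=-2\sin(\theta/2)\cdot(iM(\theta))$, where $M(\theta)=e^{i\theta/2}V-e^{-i\theta/2}V^{T}$ and $iM(\theta)$ is Hermitian; since $\sin(\theta/2)>0$ on $(0,2\pi)$ this exhibits $H$ there as a positive rescaling of the Hermitian family $\theta\mapsto iM(\theta)$, which is useful near $\theta=0$.

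Next I would run a continuity argument for the integer-valued function $\theta\mapsto\operatorname{sign}(H(\theta))$ on $(0,2\pi)$. Since the eigenvalues of a Hermitian matrix depend continuously on its entries, this function is locally constant off the finite set of $\theta$ with $e^{i\theta}$ a zero of $\Delta_L$. At such a $\theta_{0}$, only those eigenvalues of $H(\theta_{0})$ that vanish there can change sign as $\theta$ crosses $\theta_{0}$, so the jump of $\operatorname{sign}(H(\cdot))$ at $\theta_{0}$ has absolute value at most $2\dim\ker H(\theta_{0})$. The key estimate is
\[
  \dim\ker H(\theta_{0})=\dim\ker\bigl(V-e^{i\theta_{0}}V^{T}\bigr)\le m_{\theta_{0}},
\]
where $m_{\theta_{0}}$ is the multiplicity of $e^{i\theta_{0}}$ as a zero of $\Delta_L$; this follows from the Smith normal form of the polynomial matrix $V-tV^{T}$ over $\C[t]$, since each invariant factor vanishing at $e^{i\theta_{0}}$ contributes at least $1$ to the nullity and at least $1$ to the order of vanishing of $\det(V-tV^{T})$. (In particular no real-analyticity of eigenvalues is needed.) Hence every jump of $\operatorname{sign}(H(\cdot))$ is bounded in absolute value by twice the multiplicity of the corresponding zero of $\Delta_L$.

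Finally I would sum these contributions along the upper half-circle. Tracking $\operatorname{sign}(H(\theta))$ from $\theta$ just above $0$ up to $\theta=\pi$ and using $\operatorname{sign}(H(\pi))=\sigma(L)$ yields
\[
  |\sigma(L)|\le\bigl|\operatorname{sign}H(0^{+})\bigr|+2\!\!\sum_{0<\theta_{0}<\pi}\!\!m_{\theta_{0}}+m_{-1},
\]
the first and last terms accounting for possible zeros of $\Delta_L$ at $1$ and at $-1$. For the term at $1$ one uses the $M$-form: $\operatorname{sign}H(0^{+})=-\operatorname{sign}(iM(0^{+}))$, and $iM(0)=i(V-V^{T})$ has signature $0$ (complex conjugation sends it to its negative) and nullity $\dim\ker(V-V^{T})\le m_{1}$ by the same Smith-normal-form bound, so $|\operatorname{sign}H(0^{+})|\le m_{1}$. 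Because $\Delta_L$ has real coefficients, its zeros on the unit circle are invariant under complex conjugation, so their total number counted with multiplicity is exactly $m_{1}+m_{-1}+2\sum_{0<\theta_{0}<\pi}m_{\theta_{0}}$; comparing with the displayed inequality gives $|\sigma(L)|\le\#\{\text{zeros of }\Delta_L\text{ on }S^{1},\text{ with multiplicity}\}$, as claimed.

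The step I expect to be the main point --- and the one requiring the most care --- is the middle one: pinning down $\dim\ker H(\theta_{0})$ as the nullity of the pencil $V-e^{i\theta_{0}}V^{T}$, bounding it by the multiplicity of the zero of $\Delta_L$, and then noticing that although a single jump is controlled only by \emph{twice} a multiplicity, the conjugation symmetry of the real Laurent polynomial $\Delta_L$ exactly restores that factor of $2$ in the total, so that the final bound comes out with no slack. The degenerate behaviour at $\omega=\pm1$, where $H$ is singular but the signature is still governed by the skew form $V-V^{T}$, is the secondary point demanding attention.
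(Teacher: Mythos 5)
Your proposal is correct and follows essentially the same route as the paper's appendix: the Tristram--Levine signature function $\omega\mapsto\operatorname{sign}\bigl((1-\omega)V+(1-\bar\omega)V^{T}\bigr)$, jumps bounded by the nullity of the pencil at each zero, the Smith-normal-form comparison of nullity with multiplicity (the paper's Lemma~B), the skew-symmetric degeneration at $\omega=1$, and conjugation symmetry to absorb the factor of $2$. The only cosmetic slip is calling $H(\theta)=-2\sin(\theta/2)\,(iM(\theta))$ a \emph{positive} rescaling of $iM(\theta)$ on $(0,2\pi)$, but since you only use $|\operatorname{sign}H(0^{+})|$ and $\operatorname{sign}(iM(0))=0$, nothing is affected.
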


We start by recalling the definitions of the Alexander polynomial,
the signature and its generalisations, the $\omega$~--~signatures, as defined by Levine and Tristram~\cite{Lev,Tr}.
Let $L$ be a link and $A_{n\times n}$ be a Seifert matrix for $L$. The Alexander polynomial $\Delta_L$ of $L$ is defined, up to normalisation, as $$\Delta_L(t)=\text{det}(tA-A^T),$$ 
and for $\omega$ on the unit circle in $\C$, the $\omega$~--~signature $\sigma_{\omega}(L)$ of $L$ is defined as the signature of the hermitian matrix $$M_\omega = (1-\omega)A+(1-\bar{\omega})A^T.$$
For $\omega=-1$, this equals the definition of the classical signature invariant $\sigma(L)$, and for $\omega=1$, it is equal to zero.

As $\omega$ runs around the unit circle, the eigenvalues $\lambda_i(\omega)\in\R$ of $M_\omega$ depend continuously on $\omega$. It may happen that one of the eigenvalues $\lambda_i(\omega)$ passes
through zero for some $\omega_0$. In this case, $\text{det}M_{\omega_0} = 0$ and the $\omega$~--~signature $\sigma_{\omega}(L)$ may have a discontinuity at $\omega_0$. We say that the
$\omega$~--~signature \textit{jumps} at $\omega_0$ and call $$j_{\omega_0} = \frac{1}{2}(\sigma_{\omega_0 + \epsilon}(L) - \sigma_{\omega_0 - \epsilon}(L)) \in \Z$$ the \textit{signature jump at} $\omega_0$.

It is well-known that the signature can only jump at zeroes of the Alexander polynomial. Indeed, for any $\omega$ on the unit circle, it holds that 
$$M_\omega = -(1-\bar{\omega})(\omega A-A^T),$$ and consequently, for any discontinuity $\omega_0 \ne 1$,
$$\Delta_L(\omega_0) = \text{det}(\omega_0 A-A^T)) = \text{det}((-(1-\bar{\omega_0})^{-1}M_{\omega_0}) = 0.$$

\begin{lemA}
For any zero $\omega_0 \ne 0$ of the Alexander polynomial, the order is greater or equal to the nullity of $\omega_0 A -A^T$.
\end{lemA}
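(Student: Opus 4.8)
The plan is to reduce Lemma~B to a statement about the Smith normal form of a polynomial matrix over the principal ideal domain $\C[t]$. Write $M(t)=tA-A^T\in M_n(\C[t])$, so that $\Delta_L(t)=\det M(t)$; we may assume $\Delta_L\not\equiv 0$, since otherwise there is nothing to prove. In this language the assertion is that
\[
\operatorname{ord}_{\omega_0}\!\big(\det M(t)\big)\ \ge\ n-\operatorname{rank}M(\omega_0)\ =\ \dim\ker M(\omega_0),
\]
that is, the order of vanishing of the determinant at $\omega_0$ is at least the nullity of the evaluated matrix $\omega_0 A-A^T$.

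The key step is the following. Since $\C[t]$ is a PID, there exist unimodular matrices $U(t),V(t)\in\mathrm{GL}_n(\C[t])$ --- polynomial matrices with $\det U,\det V\in\C^{\times}$ --- with $U(t)M(t)V(t)=\operatorname{diag}(d_1(t),\dots,d_n(t))$ for some $d_i(t)\in\C[t]$. From this I would draw two conclusions. First, $\det M(t)=c\prod_{i=1}^n d_i(t)$ for a nonzero constant $c$, hence $\operatorname{ord}_{\omega_0}(\det M)=\sum_{i=1}^n\operatorname{ord}_{\omega_0}(d_i)$. Second, evaluating at $t=\omega_0$ the matrices $U(\omega_0)$ and $V(\omega_0)$ are invertible over $\C$, their determinants being nonzero constants and in particular not vanishing at $\omega_0$; hence $\operatorname{rank}M(\omega_0)=\#\{\,i:d_i(\omega_0)\neq 0\,\}$ and therefore $\dim\ker M(\omega_0)=\#\{\,i:\operatorname{ord}_{\omega_0}(d_i)\ge 1\,\}$.

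Combining the two observations gives the lemma, since each invariant factor $d_i$ with $\operatorname{ord}_{\omega_0}(d_i)\ge 1$ contributes at least $1$ to the sum:
\[
\operatorname{ord}_{\omega_0}(\Delta_L)\ =\ \sum_{i=1}^n\operatorname{ord}_{\omega_0}(d_i)\ \ge\ \#\{\,i:\operatorname{ord}_{\omega_0}(d_i)\ge 1\,\}\ =\ \dim\ker(\omega_0 A-A^T).
\]
The hypothesis $\omega_0\neq 0$ is not actually used in this argument; it is stated because in the intended application $\omega_0$ lies on the unit circle.

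I do not anticipate a genuine obstacle: once the Smith normal form is in hand, the proof is just bookkeeping with the valuation $\operatorname{ord}_{\omega_0}$. The one point that really does the work, and which must be handled carefully, is that $U$ and $V$ can be taken \emph{unimodular over $\C[t]$} rather than merely invertible over the field $\C(t)$ --- this is exactly what lets the single diagonalisation control both the order of $\det M$ at $\omega_0$ and the rank of $M(\omega_0)$. An equivalent presentation avoiding Smith normal form would use the determinantal divisors $\Delta_j(t)$, the $\gcd$ of the $j\times j$ minors of $M(t)$: if $\dim\ker M(\omega_0)=k$, then every $(n-k+1)\times(n-k+1)$ minor vanishes at $\omega_0$ while some $(n-k)\times(n-k)$ minor does not, so $\Delta_{n-k}(\omega_0)\neq 0$ and $\Delta_{n-k+1},\dots,\Delta_n$ all vanish at $\omega_0$; since $d_i=\Delta_i/\Delta_{i-1}$ and $d_1\mid\cdots\mid d_n$, it follows that $d_i(\omega_0)=0$ for all $i>n-k$, and the same count goes through.
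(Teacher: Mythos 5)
Your proposal is correct and follows essentially the same route as the paper: both diagonalise $tA-A^T$ via Smith normal form over $\C[t]$, identify the nullity of $\omega_0 A-A^T$ with the number of invariant factors vanishing at $\omega_0$ (using that the unimodular transforming matrices remain invertible after evaluation), and conclude since each such factor contributes at least one to the order of the zero of the determinant.
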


\begin{proof}

Consider the matrix $tA -A^T \in \text{Mat}_{n\times n}(\C[t])$.
There exist matrices $P,Q \in \text{GL}(\C[t])$ such that $P(tA -A^T)Q$ is in Smith normal form,
i.e.\ $P(tA -A^T)Q$ is a diagonal matrix with entries $\alpha_i \in \C[t]$ and such that $\alpha_i |\alpha_{i+1}$, see~\cite{Sm}. 
Setting $c=\text{det}(P)\text{det}(Q) \in \C$, we obtain
\begin{align*}c\cdot\Delta_K(t)&=\text{det}(P)\text{det}(tA -A^T)\text{det}(Q)\\
&=\text{det}(P(tA -A^T)Q)\\
&=\alpha_1 \cdots \alpha_n\\
&=(t-\omega_0)^m\cdot p(t),
\end{align*}
where $p(\omega_0) \ne 0$. The number of $\alpha_i$ that have a (perhaps multiple) zero at $\omega_0$ is exactly equal to the nullity of $\omega_0 A -A^T$. Therefore, we get that
$m$ is greater or equal to the nullity of $\omega_0 A -A^T$. However, $m$ is exactly the order of the zero of the Alexander polynomial at $\omega_0$. 
\end{proof}

\begin{remarkA}
{\em The order of the zero of the Alexander polynomial at $\omega_0$ can actually be strictly greater than the nullity of $\omega_0 A -A^T$. As an example, take $\omega_0 = -1$ for the link of the singularitiy at zero of the curve given by $(x^2 +y^3)(x^3+y^2)$, see~\cite{AC1}. The monodromy matrix given towards the end has an eigenvalue $\omega_0 = -1$ with algebraic multiplicity equal to two but geometric multiplicity equal to one. }
\end{remarkA}

Since for $\omega_0 \ne 1$, the jump $|j_{\omega_0}|$ is less or equal to the nullity of $M_{\omega_0}$ and the nullity of $M_{\omega_0}$ equals the nullity of $\omega_0 A -A^T$, 
we get the following proposition relating the signature jumps to the order of the zeroes of the Alexander polynomial as a consequence of Lemma~B.

\begin{propA}
If the $\omega$~--~signature $\sigma_{\omega}(K)$ jumps at $\omega_0 \ne 1$, then the signature jump $j_{\omega_0}$ at $\omega_0$ is smaller or equal to the order of the zero of the Alexander polynomial at $\omega_0$.
\end{propA}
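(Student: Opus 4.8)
The plan is simply to chain together three facts, two of which are already assembled in the discussion immediately preceding the statement and one of which is Lemma~B.

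First I would record the algebraic identity $M_{\omega_0} = -(1-\bar{\omega_0})(\omega_0 A - A^T)$, which holds for every $\omega_0$ on the unit circle. Since $\omega_0 \ne 1$, the scalar $-(1-\bar{\omega_0})$ is a nonzero complex number, so multiplication by it is an isomorphism and hence $\text{nullity}(M_{\omega_0}) = \text{nullity}(\omega_0 A - A^T)$.

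Next I would bound the signature jump by the nullity of $M_{\omega_0}$. The eigenvalues $\lambda_i(\omega)$ of the hermitian matrix $M_\omega$ are real and depend continuously on $\omega$. Exactly $\text{nullity}(M_{\omega_0})$ of them vanish at $\omega_0$; the remaining ones are nonzero at $\omega_0$ and hence, by continuity, keep a fixed sign for $\omega$ in a punctured neighbourhood of $\omega_0$, so they contribute equally to $\sigma_{\omega_0+\epsilon}(K)$ and $\sigma_{\omega_0-\epsilon}(K)$ and cancel in the difference. Each eigenvalue that vanishes at $\omega_0$ contributes $\pm 1$ to $\sigma_{\omega_0+\epsilon}(K)$ and $\pm 1$ to $\sigma_{\omega_0-\epsilon}(K)$, hence at most $\pm 2$ to the difference $\sigma_{\omega_0+\epsilon}(K) - \sigma_{\omega_0-\epsilon}(K)$; therefore $|j_{\omega_0}| \le \text{nullity}(M_{\omega_0})$.

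Finally, since $\omega_0$ lies on the unit circle we have $\omega_0 \ne 0$, so Lemma~B applies and gives $\text{nullity}(\omega_0 A - A^T) \le m$, where $m$ is the order of the zero of $\Delta_L$ at $\omega_0$. Putting the three steps together,
$$j_{\omega_0} \le |j_{\omega_0}| \le \text{nullity}(M_{\omega_0}) = \text{nullity}(\omega_0 A - A^T) \le m,$$
which is the claim (and in fact even $|j_{\omega_0}| \le m$). The only step that requires any care is the middle one, namely the standard perturbation argument that the signature of a continuous family of hermitian matrices can change by at most twice the nullity when one crosses a degeneracy; everything else is bookkeeping with the identity above and an invocation of Lemma~B.
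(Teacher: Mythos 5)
Your proposal is correct and follows essentially the same route as the paper: the identity $M_{\omega_0} = -(1-\bar{\omega_0})(\omega_0 A - A^T)$ gives equality of nullities, the perturbation argument bounds $|j_{\omega_0}|$ by the nullity of $M_{\omega_0}$, and Lemma~B bounds that nullity by the order of the zero. The paper simply states the first two facts in the sentence preceding the proposition and cites Lemma~B, whereas you spell out the eigenvalue-continuity argument explicitly; the content is identical.
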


\begin{proof}[Proof of Theorem~A]
So far we examined the case $\omega_0 \ne 1$. In order to make a statement about the total number of zeroes of the Alexander polynomial that lie on the unit circle, we also have to study the situation at $\omega_0 = 1$. If $\omega$ tends towards $1$, the eigenvalues $\lambda_i(\omega)$ of $M_\omega$ tend, up to some normalisation constant, to the eigenvalues of $iA-iA^T$. 
Since $A-A^T$ is skew-symmetric, the signature of $iA-iA^T$ is zero. Therefore, for $\omega$ close enough to $1$, 
the modulus $\vert \sigma_{\omega}(L)\vert = \vert \sigma(M_\omega)\vert$ is bounded from above by the nullity of $A-A^T$, which in turn is bounded from above by the order of the zero of the Alexander polynomial at 1 by Lemma~B. Together with Proposition~D, this yields the desired result.
\end{proof}


\bibliographystyle{siam}

\end{document}